\newtheorem{theorem}{Теорема}
\newtheorem{lemma}{Лемма}[section]
\newtheorem{corollary}[lemma]{Следствие}
\newtheorem{proposition}[lemma]{Предложение}
\newtheorem{definition}{Определение}
\newtheorem{example}[lemma]{Пример}
\newtheorem{remark}[lemma]{Замечание}
\newcommand{\bl}{\begin{lemma}}
\newcommand{\el}{\end{lemma}}
\newcommand{\bt}{\begin{theorem}}
\newcommand{\et}{\end{theorem}}
\newcommand{\bcor}{\begin{corollary}}
\newcommand{\ecor}{\end{corollary}}
\newcommand{\bp}{\begin{proof} \ }
\newcommand{\ep}{\end{proof} \medskip}
\newcommand{\bpr}{\begin{proposition}}
\newcommand{\epr}{\end{proposition}}
\newcommand{\brem}{\begin{remark} \em}
\newcommand{\erem}{\end{remark}}
\newcommand{\bd}{\begin{definition} \em}
\newcommand{\ed}{\end{definition}}
\newcommand{\bex}{\begin{example} \em
}
\newcommand{\eex}{\end{example}}
\newcommand{\beq}{\begin{equation} }
\newcommand{\eeq}{\end{equation}}
\newcommand{\bi}{\begin{itemize}
  }
\newcommand{\ei}{\end{itemize}}
\newcommand{\ben}{\begin{enumerate} }
\newcommand{\een}{\end{enumerate} }
\newenvironment{enumr}{

\begin{enumerate}     }{\end{enumerate}

}
\newcommand{\benr}{\begin{enumr}
  }
\newcommand{\eenr}{
\end{enumr}}
\newcommand{\ignore}[1]{}
\newlength{\hilflh}
\renewcommand{\phi}{\varphi}
\title{Унификация в подсистеме $\j$  логики доказуемости $\mathbf{GLB}$}
\author{Н.В. Лукашов \footnote{Работа поддержана программой <<Научный фонд НИУ ВШЭ>>, грант 23-00-022.}\\ lnv619@gmail.com}
\affil{факультет математики\\ НИУ <<Высшая школа экономики>>}
\date{июнь 2023 г.}
\renewcommand{\phi}{\varphi}
\renewcommand{\l}{\langle}
\renewcommand{\r}{\rangle}
\newcommand{\w}{\mathcal{W}}
\renewcommand{\j}{\mathbf{J}_2}
\newcommand{\with}{\, \& \,}
\renewcommand{\t}{\overline{\theta}}
\renewcommand{\u}{\mathcal{U}}
\renewcommand{\a}{\mathcal{A}}
\begin{document}

    \maketitle
   \begin{abstract}
	Мы обобщим методы С. Гилярди, разработанные им в \cite{ghilardi2000best}, и применим их к  подсистемы $\mathbf{J}_2$ бимодальной логики доказуемости \textbf{GLB}. Мы опишем проективные формулы в $\mathbf{J}_2$ в терминах семантики Крипке и с помощью него докажем, что  логика $\j$ имеет конечный тип унификации. В качестве применения полученных результатов, мы  покажем разрешимость проблемы допустимости правил вывода логики  $\j$.
\end{abstract}
	\section{Введение}
	
	Бимодальная логика доказуемости \textbf{GLB} была введена Г.K. Джапаридзе~\cite{japaridze} в 1985 году как расширение логики доказуемости Гёделя-Лёба \textbf{GL} с модальностью $\square$ (она же $[0]$), интерпретируемую как доказуемость в арифметике Пеано,  добавлением новой модальности $[1]$, интерпретируемую как $\omega$-доказуемость в PA.  
	
	Основная трудность в изучении логики \textbf{GLB}, как было показано самим Г.К. Джапаридзе, заключается в отсутствии полноты относительно любого класса шкал Крипке: если $\l W, R_0, R_1\r$ --- шкала Крипке для $\mathbf{GLB}$, то с неизбежностью $R_1 = \varnothing$. Тем не менее, Г.К. Джапаридзе \cite{japaridze} смог показать арифметическую полноту и разрешимость \textbf{GLB}.
	
	После К. Игнатьев \cite{ignatiev1992closed, ignatiev1993strong} установил для \textbf{GLB} интерполяционное свойство Крейга, теорему о неподвижной точке, теорему о нормальной форме замкнутых формул.
	
	Важные результаты об унификационном типе логики \textbf{GL} были получены С. Гилярди \cite{ghilardi2000best} в 2000 г. Он показал, что наличие проективного унификатора для формулы $\phi$ (т.е. такой подстановки $\sigma$, что $\vdash_\mathbf{GL} \sigma(\phi)$ и $\phi \vdash_\mathbf{GL} \sigma(p) \leftrightarrow p$ для любой переменной $p$) эквивалентно  наличию для  её класса  моделей Крипке $MOD_\mathbf{GL}(\phi)$ некоторого \textit{свойства расширения}. Дальше С. Гилярди установил, что любая унифицируемая формула в \textbf{GL} имеет конечный базис унификаторов (т.е  логика \textbf{GL} имеет \textit{конечный} тип унификации), и  в терминах проективной аппроксимации  дал описание допустимых правил в логике~\textbf{GL}. 
 
	Первые шаги в решении аналогичных проблем для логики \textbf{GLB} были сделаны  Д.~Макаровым в его выпускной квалификационной работе~\cite{makarov}, однако работа не была доведена до конца. 
	
	В данной работе мы обощим методы С. Гилярди и получим аналогичные результаты для подсистемы $\mathbf{J}_2$ логики \textbf{GLB}, введённой Л.Д.~Беклемишевым, которая  уже полна по Крипке относительно так называемых \textit{стратифицированных} моделей. Мы  получим описание проективных формул в логике $\mathbf{J}_2$ в терминах семантики Крипке и покажем финитный тип унификации $\mathbf{J}_2$. В~заключение, мы опишем допустимые правила логики $\mathbf{J}_2$, используя   проективную аппроксимацию, и покажем, что проблема допустимости правил вывода в $\j$ алгоритмически разрешима.

 Отметим, что вопросы о типе унификации для логики \textbf{GLB} и разрешимости проблемы допустимости правил вывода для \textbf{GLB} пока остаются открытыми.

	\section{Предварительные сведения}

	\subsection{Базовые понятия.}
	\textit{Язык} бимодальной пропозициональный логики состоит из  пропозициональных переменных $p_1, p_2, \ldots$, констант $\top$ и $\bot$, булевых связок $\wedge, \vee, \neg, \to$ и модальностей $[0]$ и $[1]$. При этом модальность $\l i\r$ понимается как сокращение $\neg[i]\neg$. \textit{Модальная глубина}  $d(\phi)$ формулы $\phi$ определяется индукцией по построению: $d(p_i) = 0, d(\bot)=d(\top)=~0, \ d(\phi \circ \psi)=max\{d(\phi), d(\psi)\}$ для булевых связок $\circ$, $d([i]\phi) = 1+d(\phi)$ для каждого~$i$.
	
	\textit{Шкалой Крипке}  $\l W, R_0, R_1\r$ для языка бимодальной логики называется непустое множество $W$ (множество \textit{миров}) вместе с бинарными отношениями $R_0$ и $R_1$ на $W$ (отношения \textit{достижимости}).  
	
	\textit{Моделью Крипке} $\l W, R_0, R_1, v\r$ называется шкала Крипке $\l W, R_0, R_1\r$ вместе с \textit{оценкой переменных} $v$ --- функцией, сопоставляющей каждой пропозициональной переменной подмножество $W$ (\textit{множество истинности} $p_i$). Примем соглашение, что  $x \in \w$ означает $x\in W$. 
	
	Для модели Крипке $\w$ и мира $x \in \w$ мы можем рассмотреть \textit{пораждённую подмодель} $\w_x$, определяемую как наименьшее подмножество носителя $W$, которое содержит мир  $x$ и такое, что $y\in \w_x \with yR_iz \Rightarrow z \in \w_x$ для каждого $i$. При этом  $x$ называется \textit{корнем} модели $\w_x$.

	 Индукцией по построению формулы $\phi$ определим  её истинность в модели $\w$ в мире $x\in \w$ (обозначение $\w, x \Vdash \phi$):
	 \begin{itemize}
	 	\item $\w, x \Vdash p_i \Leftrightarrow x\in v(p_i)$;
	 		\item $\w, x \Vdash \top$, $\w, x \not\Vdash \bot$;
	 			\item $\w, x \Vdash \phi_1 \land \phi_2 \Leftrightarrow (\w, x \Vdash \phi_1 \text{ и } \w, x \Vdash \phi_2)$;
	 				\item $\w, x \Vdash \phi_1 \vee \phi_2 \Leftrightarrow( \w, x \Vdash \phi_1 \text{ или } \w, x \Vdash \phi_2)$;
	 					\item $\w, x \Vdash \phi_1 \to \phi_2 \Leftrightarrow (\w, x \Vdash \phi_1  \Rightarrow \w, x \Vdash \phi_2)$;
	 						\item $\w, x \Vdash \neg\phi_1  \Leftrightarrow \w, x \not\Vdash \phi_1 $;
	 							\item $\w, x \Vdash [i]\phi_1  \Leftrightarrow \forall y \ (xR_iy \Rightarrow \w, y \Vdash \phi_1)$.
	 \end{itemize}
 
 Если формула $\phi$ истинна во всех мирах модели $\w$, то будем писать $\w \vDash \phi$ и говорить, что  $\phi$ \textit{глобально истинна} в модели $\w$.  Дальше в работе через $L$ будем обозначат логику \textbf{GLB} или её подсистемы. Если логика $L$ обладает свойством конечных моделей,  то  $MOD_L(\phi)$  --- множество конечных	моделей с корнем, в  каждом мире которых истинна  формула~$\phi$.

	\subsection{Логика GLB и её подсистемы} 
	Пропозициональная бимодальная логика  $\mathbf{GLB}$    c двумя модальностями $[0]$ и $[1]$ задаётся следующими схемами аксиом и правилами вывода: 
	\begin{enumerate}[leftmargin=*, align=left]
		\item[\textit{Аксиомы:}] 	\begin{enumerate}[label=(\roman*)]
			\item \label{glb_1}все булевы тавтологии;
			\item $[i](\phi \to \psi) \to \left([i] \phi \to \left([i] \psi\right)\right)$, $i = 0,1$;
			\item $[i]([i]\phi \to  \phi) \to [i]\phi$, $i=0, 1$;
			\item $[m]\phi \to [n][m] \phi$, для $m\leqslant n$;
			\item\label{glb_5} $\langle0\rangle \phi \to [1] \langle 0 \rangle \phi$;
			\item \label{glb_4} $[0] \phi\to [1]\phi$.
		\end{enumerate} 
	
		\item[\textit{Правила вывода:}] modus ponens; $\phi \vdash [i]\phi$, $i=0, 1$.
	\end{enumerate}

	Для преодоления трудностей, связанных с отсутствием полноты у \textbf{GLB} относительно какого-нибудь класса шкал Крипке, были предложены подсистемы  $\mathbf{GLB}$, которые уже являются полными по Крипке.
	
	К. Игнатьев впервые  выделил отдельно аксиомы \ref{glb_1}-\ref{glb_5} и рассмотрел соответствующую подсистему, которую мы будем обозначать через  \textbf{I}. Игнатьев показал, что логика \textbf{I} полна относительно класса шкал  Крипке $\l W, R_0, R_1\r$, удовлетворяющим двум условиям (будем называть        такие шкалы \textit{шкалами Игнатьева}):
	 \begin{itemize}
	 	\item $R_i$ обратно фундированное, иррефлексивное, транзитивное отношение на $W$, для каждого $i=0, 1$;
	 	\item $
	 	\forall x, y \ (xR_1y \Rightarrow \forall z \ (xR_0z\Leftrightarrow yR_0z))$ --- рис. \ref{fig_ignat}. 
	 \end{itemize}
 
		\begin{figure}[t]
			\centering
			\begin{minipage}{0.49\textwidth}
				\centering
				\includegraphics[height=0.41\textwidth]{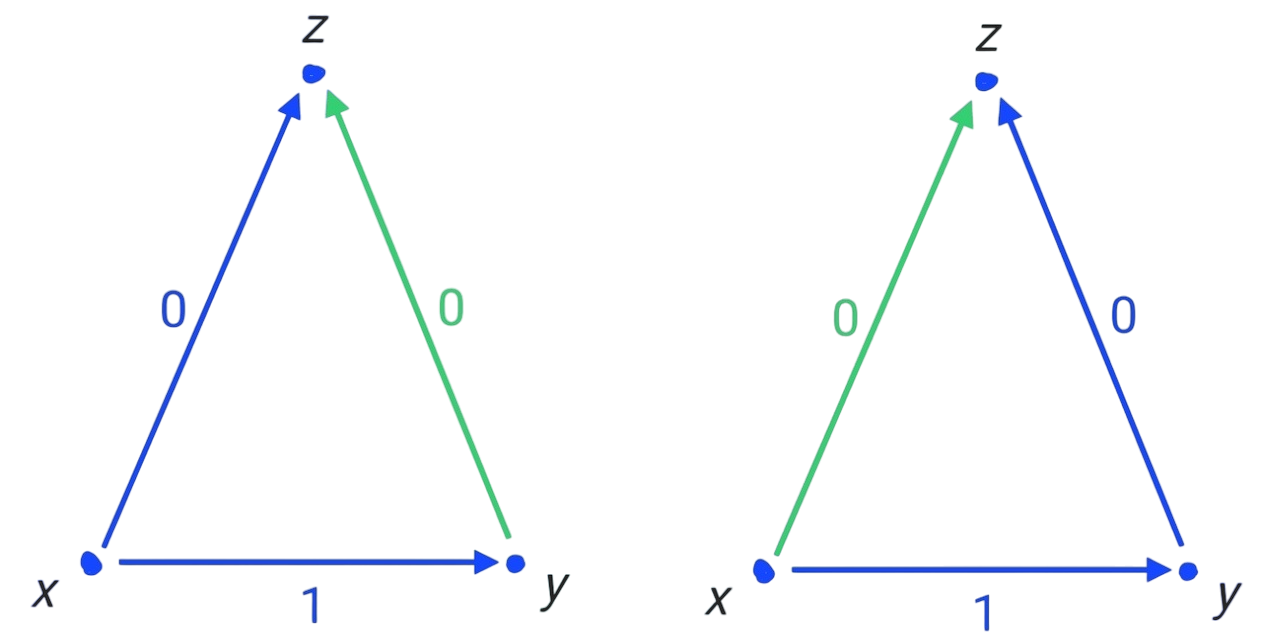}
				\caption{\textit{шкалы Игнатьева}}
				\label{fig_ignat}
			\end{minipage}
						\begin{minipage}{0.24\textwidth}
												\centering
							\includegraphics[height=0.85\textwidth]{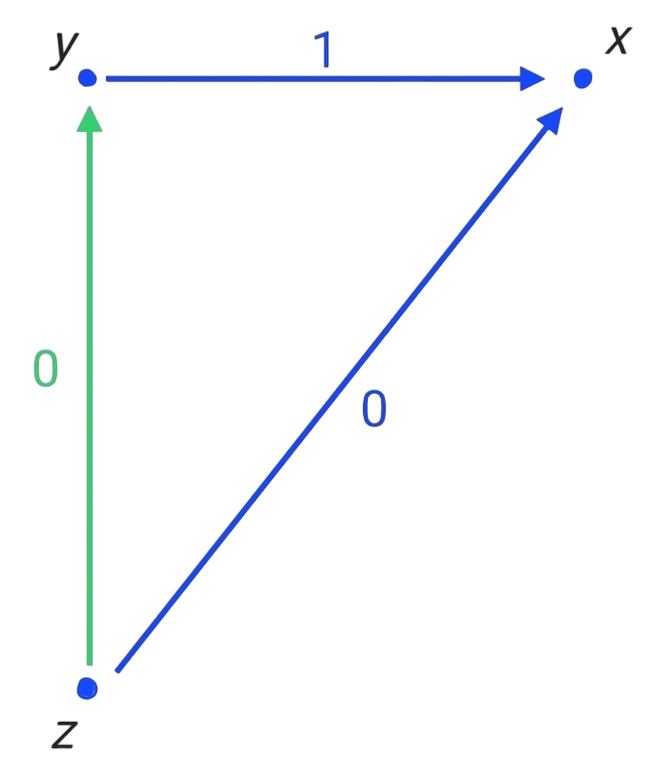}
							\caption{$\mathbf{J}_2$-\textit{шкалы}}
							\label{fig_jframes}
						\end{minipage}
		\begin{minipage}{0.24\textwidth}
		\centering
		\includegraphics[height=0.85\textwidth]{str3.png}
		\caption{ (\ref{condition_stratif})}
		\label{fig_stratif}
	\end{minipage}
 			\end{figure}

	После Л.Д. Беклемишевым была рассмотрена подсистема $\mathbf{J}_2$, получающаяся из \textbf{I} добавлением схемы аксиом $[m]\phi \to [m][n]\phi$ для $m  \leqslant n$, которые выводимы в \textbf{GLB}. $\mathbf{J}_2$-\textit{шкалой} называется шкала Игнатьева, удовлетворяющая условию: 
	\begin{itemize}
		\item $\forall x, y \ (xR_my \with yR_nx \Rightarrow xR_m z),$ если  $m\leqslant n$ --- рис. \ref{fig_jframes}.
	\end{itemize}
	
	В результате, Л.Д. Беклемишев установил соответствующую полноту:\footnote{\cite{beklemishev2010kripke}, теорема 1.}
	\begin{theorem}
		Логика $\mathbf{J}_2$ корректна и полна относительно (конечных) $\mathbf{J}_2$-шкал.
	\end{theorem}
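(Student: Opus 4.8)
The plan is to prove the two directions separately: soundness (\emph{корректность}) by a routine Sahlqvist-style correspondence check, and completeness by a finite model construction in the spirit of the Henkin--filtration proof of completeness for $\GL$. For soundness it suffices to verify that each axiom scheme is valid on every $\j$-frame and that the rules preserve global truth. The Boolean and normality axioms are standard; the Löb axiom (iii) is valid precisely because each $R_i$ is transitive and converse well-founded, which is exactly the Ignatiev-frame requirement; axiom (iv) $[m]\phi\to[n][m]\phi$ corresponds to $R_n\circ R_m\subseteq R_m$ for $m\le n$, which follows from transitivity together with the Ignatiev interaction condition; axiom (v) $\l 0\r\phi\to[1]\l 0\r\phi$ corresponds to one half of the Ignatiev biconditional; and the new $\j$-axiom $[m]\phi\to[m][n]\phi$ corresponds to $R_m\circ R_n\subseteq R_m$ for $m\le n$, i.e. exactly the defining $\j$-frame condition. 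Each correspondence is a direct unravelling of the forcing clauses, so soundness reduces to bookkeeping.

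For completeness I would establish the finite model property directly. Fix $\phi$ with $\not\vdash_{\j}\phi$ and let $\Sigma$ be a finite set of formulas containing $\phi$, closed under subformulas and single negations (and large enough to witness the interaction axioms). Take as worlds the maximal $\j$-consistent subsets of $\Sigma^{\pm}$, and define, for each $i$, a relation $\Delta\, R_i\, \Gamma$ by two $\GL$-style clauses: (a) $[i]\psi\in\Delta$ implies $\psi\in\Gamma$ and $[i]\psi\in\Gamma$ for every $[i]\psi\in\Sigma$; and (b) there is some $[i]\chi\in\Sigma$ with $[i]\chi\in\Gamma\setminus\Delta$. Clause (a) forces transitivity, while clause (b) makes the finite measure $\{[i]\chi\in\Sigma:[i]\chi\in\,\cdot\,\}$ strictly increase along $R_i$, so $R_i$ is irreflexive and, being finite and transitive, converse well-founded. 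The Truth Lemma $\w,\Delta\Vdash\psi\iff\psi\in\Delta$ (for $\psi\in\Sigma$) is then proved by induction; its only non-trivial case is $[i]\psi$, handled by an existence lemma: $[i]\psi\notin\Delta$ yields a successor $\Gamma$ with $\Delta\,R_i\,\Gamma$ and $\psi\notin\Gamma$. This existence lemma is exactly where the Löb axiom is used — adjoining $\neg\psi$ together with all $[i]\chi$ and $\chi$ demanded by clause (a) is consistent, and Löb supplies the strict increase required by clause (b).

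The main obstacle is verifying the interaction frame conditions for the model just built, since $R_0$ and $R_1$ are defined independently. Concretely I must check that $\Delta\,R_1\,\Gamma$ implies $\Delta$ and $\Gamma$ have the same $R_0$-successors (the Ignatiev biconditional) and that $R_m\circ R_n\subseteq R_m$ for $m\le n$ (the $\j$-condition). Both inclusions are obtained by pushing the defining clauses of $R_i$ through axioms (iv), (v) and the new $\j$-axiom: for instance $[0]\psi\in\Delta$ with $\Delta\,R_1\,\Gamma$ forces $[0]\psi\in\Gamma$ via $[0]\phi\to[1][0]\phi$, which transfers $R_0$-successors from $\Delta$ to $\Gamma$ and back. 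The delicate point is that clause (b) for $R_0$ (the strict-measure condition ensuring converse well-foundedness) must stay compatible with this transfer; guaranteeing it may require closing $\Sigma$ under the relevant $[i][j]$-combinations so that the measures are preserved when successors are moved between $\Delta$ and $\Gamma$. Once both interaction conditions are confirmed, the generated submodel $\w_{\Delta_0}$ rooted at any world $\Delta_0\ni\neg\phi$ is a finite, rooted $\j$-model refuting $\phi$, which gives completeness; soundness supplies the converse, and the theorem follows.
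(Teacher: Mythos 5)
First, note that the paper does not prove this theorem at all: it is imported as an external result (the footnote points to \cite{beklemishev2010kripke}, Theorem~1), so there is no internal proof to compare yours against. Judged on its own terms, your soundness half is correct and essentially complete: the frame correspondences you list for the L\"ob axiom, for $[m]\phi\to[n][m]\phi$, for $\l 0\r\phi\to[1]\l 0\r\phi$ and for the extra $\j$-scheme $[m]\phi\to[m][n]\phi$ are the right ones, and verifying them really is bookkeeping.

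The completeness half, however, has a genuine gap, and it is exactly the point you flag yourself with ``may require closing $\Sigma$ under the relevant $[i][j]$-combinations''. The difficulty is not cosmetic. Because $\j$ lacks the monotonicity axiom $[0]\phi\to[1]\phi$, the only way to verify $R_0\circ R_1\subseteq R_0$ in your finitary canonical model is to route ``$[0]\psi\in\Delta$ implies $\psi\in\Theta$'' (for $\Delta\,R_0\,\Gamma\,R_1\,\Theta$) through the instance $[0]\psi\to[0][1]\psi$, which forces $[0][1]\psi$ and $[1]\psi$ into $\Sigma$; similarly, the Ignatiev biconditional needs $[1][0]\psi$ and $[1]\neg[0]\psi$ in $\Sigma$ for every $[0]\psi\in\Sigma$. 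Each such addition enlarges the stock of $[0]$- and $[1]$-formulas over which your clauses (a) and (b) quantify, so the definitions of $R_0$ and $R_1$ change under your feet. You must then prove (i) that the closure terminates in a finite set --- it is not obvious a priori that $[0][1]\psi$, itself a new $[0]$-formula, does not in turn demand $[0][1][1]\psi$ and so on --- and (ii) that the existence lemmas and the strict-measure argument for irreflexivity and converse well-foundedness survive for the enlarged $\Sigma$. Until this is carried out, the construction is a plan rather than a proof; this verification is precisely the technical content of Beklemishev's Theorem~1, which the present paper deliberately cites instead of reproving.
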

	    
		Положим $E_m$ --- симметричное, транзитивное, рефлексивное замыкание $R_m$. Классы эквивалентности $E_m$ называются \textit{$m$-листами} или \textit{$m$-слоями.} Непосредственно из определения, имеем следующие свойства $m$-листов: 
		\begin{itemize}
			\item Любой $0$-лист разбивается на $1$-листы.
			\item Все точки $1$-листа не сравнимы между собой отношению  $R_0$ (иначе, из свойств логики $\j$, был бы рефлексивный мир).

			\item Существует отношение упорядочивания $R_0$ на $1$-листах, определяемое как 
			$$\alpha R_0 \beta ,\text{\quad если } \exists x \in \alpha  \ \exists y \in \beta \ xR_0y.$$
			Более того, так как $xR_1y \Rightarrow \forall z \ (xR_0z {\Leftrightarrow }yR_0z)$, то 
			$$\alpha R_0 \beta \Longleftrightarrow \exists y \in \beta \ \forall x \in \alpha  \ xR_0y. $$
		\end{itemize}

	\subsection{Стратифицируемость}
	Оказывается, что можно рассмотреть ещё более специализированный класс моделей Крипке логики $\j$, относительно которого по-прежнему будет полнота. 
	
	\begin{definition}
	Шкала логики $\mathbf{J}_2$ называется \textit{стратифицированной}, если для неё выполнено следующие дополнительное условие (рис. \ref{fig_stratif}):
		\begin{flalign}\label{condition_stratif}
		\forall x, y, z  \ (zR_m x \with y R_n x \Rightarrow zR_my), \quad \text{если } m<n. \tag{S}
		\end{flalign}
	\end{definition}
	
	Тогда в стратифицированных шкалах для любых $1$-листов $\alpha$ и $\beta$, таких что $\alpha R_0\beta$, каждая точка листа $\beta$ $R_0$-достижима из любой точки листа $\alpha$. Поэтому  $R_0$-упорядочивание в стратифицированных шкалах полностью задаётся их  $R_0$-упорядочиванием 1-листов. Таким образом,  стратифицированные модели можно представлять себе как трёхмерные структуры (см. рис.  \ref{fig_str_models}). Следующая теорема доказана Л.Д. Беклемишевым:\footnote{\cite{beklemishev2010kripke}, теорема 2.}
	
	\begin{theorem} \label{stratification}
				Логика $\mathbf{J}_2$ корректна и полна относительно (конечных) стратифицированных шкал.
	\end{theorem}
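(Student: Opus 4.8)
\textbf{Soundness} requires no work: every stratified frame is by definition a $\j$-frame, so it already validates all theorems of $\j$ by the soundness part of the preceding theorem, and shrinking the class of frames can only preserve soundness.

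For \textbf{completeness} the plan is to argue contrapositively and lean on the preceding theorem. Assume $\not\vdash_{\j}\phi$; then there is a finite $\j$-model $\w$ with a root at which $\phi$ is refuted, and it remains to convert $\w$ into a finite \emph{stratified} model refuting $\phi$. Two structural facts about $\j$-frames guide the construction. First, iterating the Ignatiev condition along $R_1$-chains shows that all worlds of a single $1$-leaf $\alpha$ share one and the same set $S(\alpha)$ of $R_0$-successors; thus $R_0$ already induces a well-defined ordering on $1$-leaves, and the sole defect of $\w$ with respect to stratification is that $S(\alpha)$ may meet a leaf $\beta$ without swallowing it. Second, the mixed-transitivity condition $x R_0 y \with y R_1 z \Rightarrow x R_0 z$ shows that $S(\alpha)\cap\beta$ is closed downwards under $R_1$ inside $\beta$, so the worlds of $\beta$ absent from $S(\alpha)$ are precisely the $R_1$-higher ones.

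It is tempting to repair the defect by brute completion, declaring $x\mathrel{R_0'}y' \iff \exists y\,(y\mathrel{E_1}y' \with x\mathrel{R_0}y)$; this does yield a stratified $\j$-frame, but it adds $R_0$-edges into $R_1$-higher worlds and hence disturbs the truth of $[0]$-formulas. The honest route is therefore to realize the stratified model as a \emph{p-morphic preimage}: I would build a finite stratified model $\w'$ together with a surjective bounded morphism $f\colon\w'\to\w$, pulling the valuation back along $f$. The model $\w'$ is constructed by processing the $1$-leaves top-down in $R_0$-rank and, whenever completion demands a new $R_0$-edge into the $R_1$-higher part of a leaf $\beta$, splicing in a fresh bisimilar copy of the relevant already-reachable submodel rather than reusing the offending world itself; converse well-foundedness of $R_0$ and $R_1$ (the model is finite) guarantees that this duplication terminates.

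The main obstacle is the simultaneous bookkeeping in this last step: one must check that $\w'$ is genuinely a $\j$-frame satisfying the stratification condition (\ref{condition_stratif}), while $f$ meets the back (``zig'') clause of a bounded morphism for $R_0'$ --- every $R_0$-successor in $\w$ must be the $f$-image of an $R_0'$-successor in $\w'$ --- and these two demands have to be reconciled with the complete-bipartite $R_0'$ forced by stratification. Once $f$ is verified to be a bounded morphism, truth transfers backwards along it, so the chosen root of $\w'$ refutes $\phi$, which completes the argument.
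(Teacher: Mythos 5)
First, a point of comparison: the paper does not prove Theorem \ref{stratification} at all --- it is imported from Beklemishev's work (the footnote cites \cite{beklemishev2010kripke}, Theorem 2), so there is no in-paper proof to measure your argument against. Your soundness half is correct and needs no more than you say: stratified frames are by definition a subclass of the $\j$-frames, so validity of every theorem of $\j$ is inherited from the preceding completeness theorem.

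The completeness half, however, has a genuine gap. The architecture is the right one --- take a finite $\j$-countermodel for $\phi$, exhibit it as the image of a finite \emph{stratified} model under a surjective bounded morphism, and pull truth back --- and your structural observations about $1$-leaves are essentially sound (though note the direction: mixed transitivity $xR_0y \with yR_1z \Rightarrow xR_0z$ makes $S(\alpha)\cap\beta$ closed under taking $R_1$-\emph{successors}, so the worlds of $\beta$ missing from $S(\alpha)$ are those that merely $R_1$-precede the present ones, not the ``$R_1$-higher'' ones as you write). But the decisive step --- actually constructing the stratified cover $\w'$ and verifying that it is an Ignatiev frame, a $\j$-frame, satisfies (\ref{condition_stratif}), and carries a map $f$ meeting both the forth and back clauses for $R_0$ and $R_1$ --- is only gestured at (``splicing in a fresh bisimilar copy of the relevant already-reachable submodel''), and you yourself defer its verification as ``the main obstacle''. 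That is precisely where the content of the theorem lives: when a leaf $\alpha$ sees only an $R_1$-up-closed part of a leaf $\beta$, stratification forces $\beta$ to be split; the split propagates to everything $R_0$-above $\beta$; the copies must be reassembled into genuine $E_1$-classes so that the Ignatiev condition (all worlds of one $1$-leaf share the same $R_0$-successors) survives; and distinct leaves $\alpha,\alpha'$ may demand incompatible splittings of the same $\beta$. None of this bookkeeping is carried out, so as written the proposal is a plan for a proof rather than a proof; to repair it you should either execute the duplication construction in full or simply cite \cite{beklemishev2010kripke} as the paper does.
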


		\begin{figure}[t]
			\centering 
			\includegraphics[height=0.25\textheight]{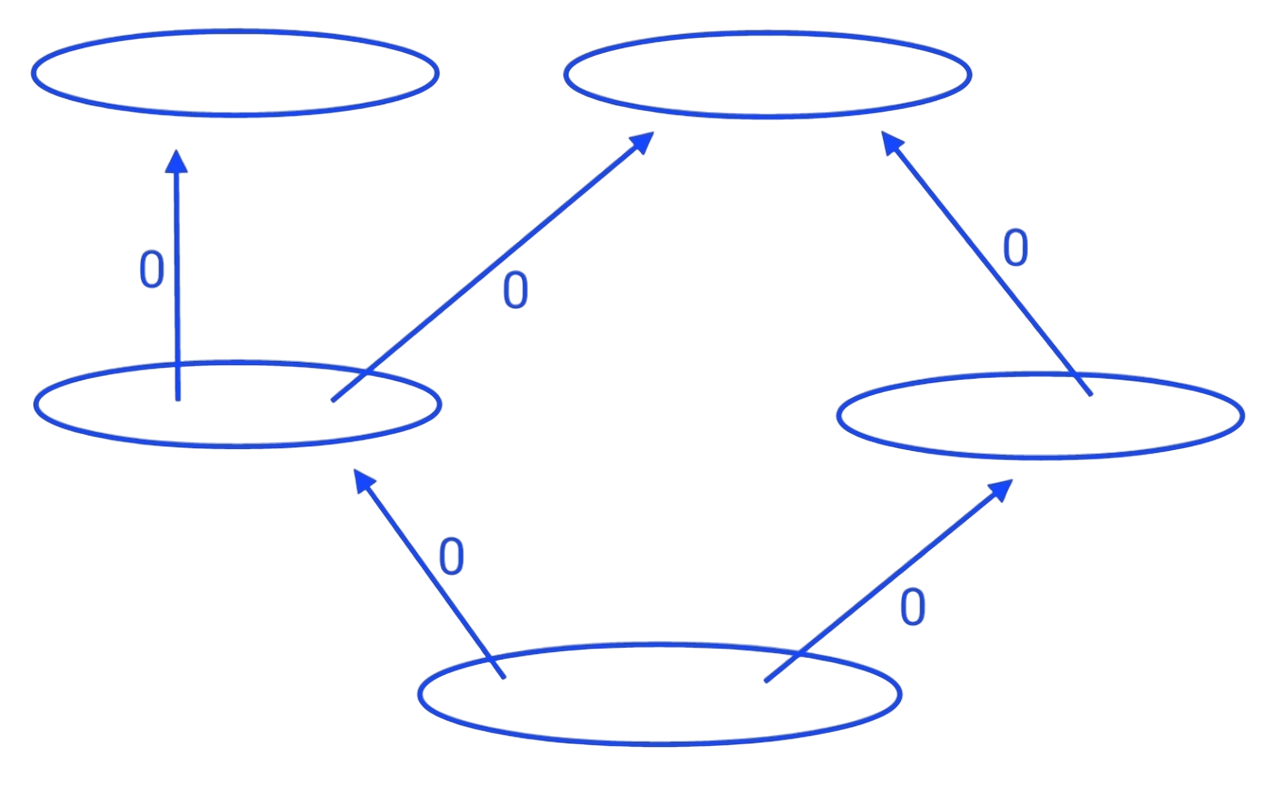}
			\caption{ \textit{стратифицированные модели}}
			\label{fig_str_models}
		\end{figure}

	\subsection{Бисимуляции}  Отношение эквивалентности $\sim_n$, называемое \textit{$n$-бисимуляцией}, между двумя моделями  определяется индукцией: 
	\begin{itemize}
		\item $\mathcal{W}_x \sim_0 \mathcal{W}'_{x'}$, если в $x$ и $x'$ истинны одни и те же пропозициональные переменные ($x \Vdash p \Leftrightarrow x' \Vdash p$). 
		\item $\mathcal{W}_x \sim_{n+1}\mathcal{W}'_{x'}$, если 
			\begin{enumerate}[label=(\roman*)]
			\item $\mathcal{W}_x \sim_0 \mathcal{W}'_{x'}$;
			\item $\forall y \in \w_x\ (xR_iy \Rightarrow \exists y' \ (x'R_iy' \with \w_y\sim_n\w'_{y'}))$ для любого $i=0, 1$;
			\item $\forall y' \in \w'_{x'}\ (x'R_iy' \Rightarrow \exists y \ (xR_iy \with \w'_{y'}\sim_n\w_y))$ для любого $i=0, 1$.
			\end{enumerate}
	\end{itemize}
	
	Из определения нетрудно видеть, что если $m>n$  и $\mathcal{W}_x \sim_m \mathcal{W}_{x'}$, то $\mathcal{W}_x \sim_n \mathcal{W}_{x'}$. Также $n$-бисимуляция --- это отношение эквивалентности. Класс эквивалетности модели $\w$ по этому отношению будем обозначать $[\w]_n$. Индукцией, нетрудно убедиться, что для каждого фиксированного $n$ количество классов эквивалентности $[\w]_n$ конечно.
	
	Для конечных моделей будем писать $\mathcal{W}_x \sim_\infty \mathcal{W}_{x'}$, если $\forall n \in \mathbb{N} \ \mathcal{W}_x \sim_n \mathcal{W}_{x'}$. Истинные смысл  и пользу $n$-бисимуляции показывает следующее предложение:
	\begin{proposition} \label{bissimulation}
		$\mathcal{W}_x \sim_n \mathcal{W'}_{x'}$ тогда и только тогда, когда для любой формулы $\phi$, такой что $d(\phi) \leqslant n$, выполнено $(\mathcal{W}_x, x\Vdash \phi \Leftrightarrow \mathcal{W'}_{x'}, x' \Vdash \phi)$.
	\end{proposition}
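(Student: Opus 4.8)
The plan is to prove the biconditional by induction on $n$, establishing both directions inside a single induction: the left-to-right implication is a routine structural induction, while the right-to-left implication rests on the existence of characteristic (normal-form) formulas, which is precisely where the stated finiteness of the number of classes $[\w]_n$ is used. For the base case $n=0$, the formulas of modal depth $0$ are exactly the Boolean combinations of propositional variables, so agreement on all of them is equivalent to agreement on each variable, which is the definition of $\sim_0$.

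For the inductive step I would first treat the forward direction. Assuming $\w_x \sim_{n+1} \w'_{x'}$, argue by structural induction on a formula $\phi$ with $d(\phi)\leq n+1$. The Boolean connectives are immediate from the induction hypothesis on subformulas, and since $\langle i\rangle$ abbreviates $\neg[i]\neg$ it suffices to treat $\phi=[i]\psi$ with $d(\psi)\leq n$. If $\w_x, x\not\Vdash[i]\psi$, pick $y$ with $xR_iy$ and $\w_y, y\not\Vdash\psi$; condition (ii) of $\sim_{n+1}$ supplies $y'$ with $x'R_iy'$ and $\w_y\sim_n\w'_{y'}$, and the outer induction hypothesis for $n$, applied to $\psi$, yields $\w'_{y'}, y'\not\Vdash\psi$, hence $x'\not\Vdash[i]\psi$; condition (iii) gives the symmetric direction.

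The backward direction is the heart of the argument and the main obstacle. Here I would first establish, by a separate induction on $n$ exploiting the finiteness of $[\w]_n$, that every pointed model $\w_y$ admits a characteristic formula $\chi^n_{\w_y}$ of modal depth $\leq n$ such that for every pointed model $\w''_{z}$ one has $\w''_z\Vdash\chi^n_{\w_y}$ iff $\w''_z\sim_n\w_y$. The construction is the usual one: $\chi^0_{\w_y}$ is the conjunction of the (finitely many) literals true at $y$, and $\chi^{n+1}_{\w_y}$ conjoins $\chi^0_{\w_y}$ with, for each $i$, the formulas $\langle i\rangle\chi^n_{\w_z}$ ranging over $R_i$-successors $z$ of $y$ together with $[i]\bigvee_z\chi^n_{\w_z}$; finiteness guarantees that these conjunctions and disjunctions are finite and that $d(\chi^{n+1}_{\w_y})\leq n+1$. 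The delicate point, proved inside this same induction using conditions (ii)--(iii), is that $\chi^n_{\w_y}$ really does cut out exactly the $\sim_n$-class of $\w_y$, and that its depth does not exceed $n$.

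Granting the characteristic formulas, the backward step is short. Suppose $x$ and $x'$ agree on all formulas of depth $\leq n+1$. Agreement on depth-$0$ formulas gives condition (i). For condition (ii), given $y$ with $xR_iy$, the formula $\langle i\rangle\chi^n_{\w_y}$ has depth $\leq n+1$ and is true at $x$, hence at $x'$, so some $R_i$-successor $y'$ of $x'$ satisfies $\chi^n_{\w_y}$, i.e. $\w'_{y'}\sim_n\w_y$; condition (iii) is symmetric. Thus $\w_x\sim_{n+1}\w'_{x'}$, completing the induction. The only genuine difficulty is the normal-form construction of the previous paragraph; everything else is bookkeeping on modal depth.
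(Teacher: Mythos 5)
Your proposal is correct and follows essentially the same route as the paper: the forward direction by a routine structural induction on the formula, and the backward direction via characteristic formulas $X^n_{\mathcal{W}}$ of depth $\leq n$ built from literals, $\langle i\rangle$-formulas for successors and a guarded disjunction under $[i]$, with finiteness of the $\sim_n$-classes ensuring the construction is finite. The only cosmetic difference is that you verify the three clauses of $\sim_{n+1}$ directly from the successor-level characteristic formulas, whereas the paper applies the root-level formula $X^n_{\mathcal{W}}$ and pushes the same verification into the induction establishing its defining property.
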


	\begin{proof}
		$(\Rightarrow)$ Доказывается непосредственно индукцией по $n$.
		
		$(\Leftarrow)$ Для простоты обозначений положим $\mathcal{W} := \mathcal{W}_x$ и $\mathcal{W}':=\mathcal{W}'_{x'}$. Достаточно показать, что существуют такая формула $X^n_{\mathcal{W}}$ глубины не более $n$, что 
		\begin{equation}\label{theorem_2}
			\mathcal{W}', x' \Vdash X^n_{\mathcal{W}} \ \Longleftrightarrow \ \mathcal{W} \sim_n \mathcal{W}'
		\end{equation}
		для любой модели $\mathcal{W}'$. Сделаем это следующим образом: для $n=0$ возьмём  $X^0_\mathcal{W} {=} \bigwedge_{x \Vdash p_i} p_i \land \bigwedge_{x \not\Vdash p_i} \neg p_i$ и для $n>0$ положим: 
		$$X^n_\mathcal{W} = \bigwedge_{x \Vdash p_i} p_i \land 
		\bigwedge_{x \not\Vdash p_i} \neg p_i \land 
		\bigwedge_{i}\bigwedge_{xR_iy} \l i\r X^{n-1}_{\mathcal{W}_y}\land
		\bigwedge_{i} [i]\left(\bigvee_{xR_iy} X^{n-1}_{\mathcal{W}_y}\right).$$
		
		Докажем истинность утверждения (\ref{theorem_2}) индукцией по $n$. База следует непосредственно из построения. Предположим, что  (\ref{theorem_2}) верно для $n-1$ и  $\mathcal{W} \sim_n \mathcal{W}'$. Тогда, по определению $(n-1)$-бисимуляции, в мире $x'$ модели $\w'$ верен каждый конъюнктивный член. Значит $\mathcal{W}', x' \Vdash X^n_{\mathcal{W}}$.

		Обратно, пусть $\mathcal{W}', x' \Vdash X^n_{\mathcal{W}}$,  и проверим, что выполнены условия $n$-бисимуляции. Рассмотрим мир $y\in \w$, такой что $xR_iy$ для некоторого $i$. Тогда $\w', x' \Vdash  \l i\r X^{n-1}_{\w_y}$, значит найдётся $y'\in \w'$ со свойством $\w', y' \Vdash X^{n-1}_{\w_y}$, что эквивалентно $\w_y \sim_{n-1}\w'_{y'}$, по предположению. Рассмотрим теперь $y'\in \w'$, такой что $x'R_iy'$. Тогда, в $y'$ верна дизъюнкция $X^{n-1}_{\mathcal{W}_y}$ по всем $y$, $R_i$-достижимым из $x\in \w$, а значит $\w', y' \Vdash X^{n-1}_{\mathcal{W}_z}$ для некоторого $z\in \w$. Отсюда $\w'_{y'}\sim_{n-1}\w_y $. 
		
	\end{proof}

	\subsection{Подстановки} Зафиксируем раз и навсегда конечное множество переменных  $\vv{p} =(p_1, p_2, \ldots,  p_n)$. Формулу от этих переменных будем обозначать соотвественно $\phi(\vv{p})$. Положим  $Form(\vv{p})$ --- множество всех формул в языке бимодальной логики вида $\phi(\vv{p})$. 
	
	\textit{Подстановкой} $\sigma$  называется функция $\sigma: \vv{p} \to Form(\vv{p})$, сопоставляющая каждой переменной из списка некоторую формулу.  
	
	Определим $\sigma(\phi(\vv{p})) \leftrightharpoons \phi(p_1/\sigma(p_1), \ldots, p_n / \sigma(p_n))$. Таким образом, $\sigma$ может быть расширена на область определения $Form(\vv{p})$.  
	
	\textit{Композиция} подстановок $\tau$ и $\sigma$ определяется как $(\tau\sigma)(p)= \tau(\sigma(p))$ для всех $p \in \vv{p}$. Подстановка $\sigma_1$ \textit{менее общая}, чем $\sigma_2$ (обозначение \linebreak$\sigma_1~\leqslant \sigma _2$), если найдётся такая подстановка $\tau$, что для всех $p\in \vv{p}$ 
	$$\vdash_L \tau(\sigma_2(p)) \leftrightarrow \sigma_1(p).$$
	
	Подстановка $\sigma$ называется \textit{унификатором} для формулы $\phi(\vv{p})$ в логике $L$, если $\vdash_L \sigma(\phi)$. Унификатор $\sigma_1$  \textit{менее общий }унификатора $\sigma_2$, если он меннее общий как подстановка. 
	
	Множество $S$ унификаторов для $\phi$  называется \textit{полным}, если любой унификатор для $\phi$ менее общий для какого-нибудь унификатора из $S$. Полное множество унификаторов  $S$ для $\phi$  называется \textit{базисом}, если любые два элемента из $S$  не сравнимы относительно предпорядка $\leqslant$. Унификатор $\sigma$ для $\phi$ называется \textit{самым общим}, если $\{\sigma\}$ является полным множеством унификаторов.
	
	Для данной подстановки $\sigma$ можно сопоставить модели Крипке $\mathcal{W}=\l W, R_0, R_1, v\r$ новую модель Крипке $\sigma(\mathcal{W})  = \l W, R_0, R_1, \sigma(v)\r$, положив 
	$$\sigma(\mathcal{W}), x \Vdash p_i \ \overset{def }{\Longleftrightarrow}\ \mathcal{W}, x \Vdash \sigma(p_i)$$
	для каждого мира $x$ и каждой переменной $p_i$.
	
	Заметим, что подстановка $\sigma$, применённая к моделям, коммутирует с ограничением на миры: $\sigma(\mathcal{W})_x = \sigma(\mathcal{W}_x)$.
	
	\begin{proposition}
	    
	 \label{property_unification}
		Пусть $\phi \in Form(\vv{p})$ и $\sigma: Form(\vv{p}) \to Form(\vv{p})$ --- подстановка. Тогда: 
		\begin{enumerate}[label*= \upshape (\roman*)] 
			\item  Для любой модели Крипке $\mathcal{W}$  выполнено $$(\sigma(\mathcal{W})\vDash \phi  \Longleftrightarrow \mathcal{W} \vDash \sigma(\phi));$$
			\item $\vdash_L \sigma(\phi)$ тогда и только, когда $\sigma(\w) \vDash \phi$ для всех всех моделей Крипке $\w$;
			\item Для любых подстановки $\tau$  и  модели Крипке $\w$ выполнено $$\sigma(\tau(\w)) {=} (\sigma\tau)(\w).$$
		\end{enumerate}
    \end{proposition}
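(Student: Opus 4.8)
The plan is to prove (i) first, since (ii) and (iii) both reduce to it. For (i) I would establish the stronger \emph{pointwise} version: for every world $x$ and every formula $\psi$,
$$\sigma(\w), x \Vdash \psi \iff \w, x \Vdash \sigma(\psi),$$
by induction on the construction of $\psi$. The base case $\psi = p_i$ is precisely the defining clause of the model $\sigma(\w)$, and the constants $\top,\bot$ are immediate. The Boolean cases follow from the induction hypothesis, since substitution commutes with $\wedge,\vee,\neg,\to$ and the truth of a Boolean combination depends only on that of its immediate subformulas. The one step deserving care is the modal case $\psi = [i]\chi$: here I would use that $\sigma(\w)$ and $\w$ carry the \emph{same} frame $\l W, R_0, R_1 \r$, so $x$ has exactly the same $R_i$-successors in both models; applying the induction hypothesis at each such successor gives $\sigma(\w), x \Vdash [i]\chi \iff \w, x \Vdash [i]\sigma(\chi)$, and $[i]\sigma(\chi) = \sigma([i]\chi)$. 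Conjoining the pointwise equivalence over all $x \in W$ then yields the global statement of (i).

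For (ii) I would combine (i) with the soundness and completeness of $L$ relative to the appropriate class of finite (stratified $\j$-) models, i.e. the completeness theorem for $L$ (Theorem~\ref{stratification}). Completeness gives $\vdash_L \sigma(\phi) \iff \w \vDash \sigma(\phi)$ for every model $\w$ in that class, and (i) rewrites each condition $\w \vDash \sigma(\phi)$ as $\sigma(\w) \vDash \phi$; hence $\vdash_L \sigma(\phi) \iff \sigma(\w)\vDash\phi$ for all $\w$, as required. No genuine difficulty arises here beyond invoking completeness.

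For (iii) I would first observe that the operations $\tau(\cdot)$, $\sigma(\cdot)$ and composition all leave the carrier $W$ and the relations $R_0, R_1$ untouched, so both models appearing in the identity have the same frame as $\w$; it therefore suffices to check that their valuations agree, i.e. to verify the truth of each variable $p_i$ at each world $x$. Unfolding the model operation once passes from $\sigma(\tau(\w)), x \Vdash p_i$ to $\tau(\w), x \Vdash \sigma(p_i)$, and the pointwise form of (i), already available from the first part, passes from there to $\w, x \Vdash \tau(\sigma(p_i))$; this last condition is exactly the defining clause, at $p_i$ and $x$, of the model obtained by applying the composite substitution to $\w$, which closes the argument. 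The only thing to watch is the order in which the two substitutions compose, which is dictated by the order of the two appeals to (i).

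The whole proposition is in essence a bookkeeping verification; the single load-bearing step is the modal case of the induction in (i), where the invariance of the frame under a substitution is exactly what makes the equivalence go through.
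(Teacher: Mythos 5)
Your proof is correct and follows essentially the same route as the paper: (i) by induction on the construction of the formula (reduced, as you do, to the pointwise statement, with the modal case resting on the fact that a substitution leaves the frame unchanged), (ii) from (i) together with soundness and completeness, and (iii) by the same three-link chain of equivalences. One remark on the point you explicitly leave "to watch" at the end of (iii): your chain correctly terminates in $\w, x \Vdash \tau(\sigma(p_i))$, which by the paper's definition $(\tau\sigma)(p)=\tau(\sigma(p))$ is the defining clause of $(\tau\sigma)(\w)$, so what your argument actually establishes is $\sigma(\tau(\w)) = (\tau\sigma)(\w)$ --- the model operation is \emph{contravariant} with respect to composition --- whereas the statement as printed (and the paper's own chain, which silently writes $\sigma(\tau(p))$ at the step where (i) yields $\tau(\sigma(p))$) has the two factors in the opposite order; this is a slip in the statement rather than a gap in your proof, but it should be resolved explicitly, since later arguments (e.g.\ the factorizations of $\t$) depend on which convention is in force.
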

	\begin{proof}
		(i) устанавливается индукцией по построению формулы $\phi$. (ii) следует из (i) и полноты логики $L$. (iii): для любых $x\in \w$ и $p \in \vv{p}$:
		$$\sigma(\tau(\w)), x \Vdash p \overset{def}{\Longleftrightarrow} \tau(\w), x \Vdash \sigma(p) \overset{\text{(i)}}{\Longleftrightarrow}  \w, x \Vdash \sigma(\tau(p)) \overset{def}{\Longleftrightarrow} (\sigma\tau)(\w), x \Vdash p.$$
		
	\end{proof}

	\subsection{Проективность} \label{section_projectivity}
	
	Формула $\phi$ называется \textit{проективной} (в логике $L$), если для неё существует такой унификатор $\sigma: Form(\vv{p}) \to Form(\vv{p})$, что для любого $p\in \vv{p}$ выполнено 
	\begin{equation} \label{projective_condition}
		\phi \vdash_L \sigma(p) \leftrightarrow p \tag{P}.
	\end{equation}
	
	Заметим, что проективный унификатор  для $\phi$ сразу же является самым общим: если $\tau$ другой унификатор для $\phi$, то, из условия (\ref{projective_condition}), имеем $\tau(\phi) \vdash_L \tau(\sigma(p)) \leftrightarrow \tau(p)$ для любой переменной $p$, то есть $\vdash_L \tau(\sigma(p)) \leftrightarrow \tau(p)$, откуда $\tau\leqslant\sigma$, поскольку $\tau(\phi)$ является теоремой $L$.
	
	Ввиду теоремы о подстановке,  условие (\ref{projective_condition})  эквивалентно следующему: 
	$$\phi \vdash_L \sigma(\psi)  \leftrightarrow \psi \text{\quad 	для любой формулы $\psi \in Form(\vv{p})$.}$$
Также заметим, что верно следующие предложение: 
	
	\begin{proposition}\label{projective_composition}
		Множество подстановок, удовлетворяющих свойству (\ref{projective_condition}), замкнуто относительно композиции, независимо от того, унифицируют ли они $\phi$ или нет.
	\end{proposition}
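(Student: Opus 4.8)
The plan is to reduce everything to the reformulation of condition (\ref{projective_condition}) recorded just above the statement via the substitution theorem. Recall that for a substitution $\sigma$ satisfying (\ref{projective_condition}) one has the strengthened form $\phi \vdash_L \sigma(\psi) \leftrightarrow \psi$ for \emph{every} formula $\psi \in Form(\vv{p})$, and not merely for the variables $p \in \vv{p}$. This formula-level version is the only ingredient I expect to need.

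First I would take two substitutions $\sigma_1, \sigma_2$ both satisfying (\ref{projective_condition}) and fix an arbitrary variable $p \in \vv{p}$. Since $\sigma_2$ satisfies the condition at variables, I have $\phi \vdash_L \sigma_2(p) \leftrightarrow p$ directly. Since $\sigma_1$ satisfies the condition, I would now apply its strengthened (formula-level) version to the particular formula $\psi := \sigma_2(p)$, obtaining $\phi \vdash_L \sigma_1(\sigma_2(p)) \leftrightarrow \sigma_2(p)$. This is the one place where it matters that (\ref{projective_condition}) holds not only for variables but for arbitrary formulas, since $\sigma_2(p)$ is in general a compound formula rather than a single variable.

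Then I would chain the two derivations. Because $L$ contains all Boolean tautologies and is closed under \MP{}, the connective $\leftrightarrow$ is transitive under hypotheses, so from $\phi \vdash_L \sigma_1(\sigma_2(p)) \leftrightarrow \sigma_2(p)$ and $\phi \vdash_L \sigma_2(p) \leftrightarrow p$ I conclude $\phi \vdash_L \sigma_1(\sigma_2(p)) \leftrightarrow p$. By the definition of composition, $(\sigma_1\sigma_2)(p) = \sigma_1(\sigma_2(p))$, so this is exactly the assertion that $\sigma_1\sigma_2$ satisfies (\ref{projective_condition}) at $p$; as $p$ was arbitrary, the composite satisfies property (P).

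I do not foresee any real obstacle here, and in particular the clause ``regardless of whether $\sigma_1, \sigma_2$ unify $\phi$'' is automatic: the argument never invokes $\vdash_L \sigma_i(\phi)$ but only the hypothetical derivations from $\phi$, so unification plays no role. The sole point deserving a word of care is the passage from the variable-level to the formula-level form of (P) for $\sigma_1$, which is precisely what the substitution theorem furnishes and what lets the composite be evaluated through the compound formula $\sigma_2(p)$.
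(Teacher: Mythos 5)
Your proof is correct, and it takes a somewhat different route from the paper's, though both rest on the same remark made just before the proposition (the formula-level strengthening of (\ref{projective_condition}) obtained from the substitution theorem). The paper, writing the composite as $\tau\sigma$, first uses the formula-level version for the \emph{outer} substitution applied to $\phi$ itself to get $\phi \vdash_L \tau(\phi)$, then applies $\tau$ to the entire derivation $\phi \vdash_L \sigma(p) \leftrightarrow p$ (using closure of $\vdash_L$ under substitution) to obtain $\tau(\phi) \vdash_L \tau(\sigma(p)) \leftrightarrow \tau(p)$, cuts against $\phi \vdash_L \tau(\phi)$, and finally chains with $\phi \vdash_L \tau(p) \leftrightarrow p$. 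You instead apply the formula-level version of (\ref{projective_condition}) for the outer substitution directly to the compound formula $\sigma_2(p)$, which immediately gives $\phi \vdash_L \sigma_1(\sigma_2(p)) \leftrightarrow \sigma_2(p)$, and then one application of transitivity of $\leftrightarrow$ finishes the argument. Your version is more economical: it avoids both the auxiliary fact $\phi \vdash_L \sigma_1(\phi)$ and the step of pushing a substitution through a derivation, at the cost of invoking the formula-level form of (P) for an arbitrary compound formula rather than only for $\phi$. The paper's version makes visible exactly which structural properties of $\vdash_L$ are used (substitution-invariance and cut), which is the only thing its detour buys. Both proofs correctly never use that either substitution unifies $\phi$, so the final clause of the statement is handled the same way in each.
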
  

	\begin{proof}
		Действительно, пусть $\sigma$ и $\tau$ --- две подстановки, удовлетворяющие свойству (\ref{projective_condition}). По наблюдению выше для $\tau$: $\phi \vdash_L \tau(\phi) \leftrightarrow \phi$, следовательно $\phi \vdash_L \tau(\phi)$. Применив $\tau$  к условию (\ref{projective_condition}) для $\sigma$, имеем: $\tau(\phi)  \vdash_L \tau(\sigma(p))\leftrightarrow \tau(p)$, откуда  $\phi \vdash_L \tau(\sigma(p))\leftrightarrow \tau(p)$. Из транзитивности, $\phi \vdash_L \tau(\sigma(p)) \leftrightarrow p$.
		
	\end{proof}

	\section{Основные результаты}

	\subsection{Проективность и свойство расширения для $\j$} 
	
	Мы покажем, что  существование проективного унификатора для формулы $\phi$  в логике $\j$ равносильно наличию некторого свойства для её класса стратифицированных моделей $MOD_S(\phi)$, относительно которого есть полнота, по теореме  \ref{stratification}.
	
	Отныне и далее, слово <<{модель}>>, будем понимать, как \textit{конечная стратифицируемая модель с корнем}.
		\begin{definition}\label{1-congruent}
		Две модели $\w$ и $\w'$  назовём \textit{1-подобными} (обозначение $\w \approx_1 \w'$), если модели (без корня), полученные из них удалением 1-листа, корня совпадают.
	\end{definition}

	\begin{definition}
			\textit{Вариантом} модели Крипке $\mathcal{W}=\l W, R_0, R_1, r, v\r$ называется такая модель Крипке $\mathcal{W}'=\l W,  R_0, R_1, r, v'\r$, что для всех миров $x \in W$ выполнено $$x\ne r \Rightarrow v(x)=v'(x).$$
			
			Класс $K$ моделей Крипке обладает \textit{свойством расширения}, если для любой модели  $\mathcal{W}=\l W, R_0, R_1, r, v\r$, удовлетворяющей условию 
			$$ x\ne r  \Rightarrow \mathcal{W}_x  \in K \text{\quad для всех миров }x \in  W,$$
			найдётся её вариант $\mathcal{W}'$, такой что $\mathcal{W}' \in K$.
	\end{definition}
	
	Таким образом, мы планируем доказать следующую теорему: 
	
	\begin{theorem} \label{extention property}
		Формула $\phi$ проективна в логике $\mathbf{J}_2$  тогда и только тогда, когда класс её стратифицированных моделей $MOD_S(\phi)$ обладает свойством расширения. 
	\end{theorem}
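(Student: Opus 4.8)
The plan is to prove the two implications separately, using throughout the finite model property of $\j$ with respect to finite rooted stratified models (Theorem \ref{stratification}), so that $\psi \vdash_{\j} \chi$ may be read semantically as ``$\chi$ holds at every world of every such model at which $\psi$ holds'' and $\vdash_{\j}\chi$ as ``$\chi$ is globally true in every such model''. I shall also use freely Proposition \ref{property_unification}: $\sigma$ unifies $\phi$ iff $\sigma(\w)\models\phi$ for every model $\w$, and $\sigma$ commutes with passing to generated submodels.

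For the direction ($\Rightarrow$), suppose $\phi$ is projective and fix a projective unifier $\sigma$. Given a model $\w=\langle W,R_0,R_1,r,v\rangle$ with $\w_x\in MOD_S(\phi)$ for every $x\ne r$, I would simply take $\w':=\sigma(\w)$ and verify the two requirements. First, $\w'$ is a variant of $\w$: for $x\ne r$ the submodel $\w_x$ globally satisfies $\phi$, so in particular $\w,x\Vdash\phi$, and the projective condition (\ref{projective_condition}) together with soundness gives $\w,x\Vdash\sigma(p_i)\leftrightarrow p_i$ for each $i$; hence $\sigma(\w)$ and $\w$ assign the same truth value to every variable at every non-root world, which is exactly the definition of a variant, the frame being unchanged. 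Second, $\w'\in MOD_S(\phi)$: since $\sigma$ unifies $\phi$ we have $\vdash_{\j}\sigma(\phi)$, so $\w\models\sigma(\phi)$, and Proposition \ref{property_unification}(i) yields $\sigma(\w)\models\phi$; as $\w'$ has the same (stratified, rooted) frame as $\w$, it lies in $MOD_S(\phi)$. This gives the extension property for $MOD_S(\phi)$.

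The substantial direction is ($\Leftarrow$). Assuming $MOD_S(\phi)$ has the extension property, I would construct a projective unifier of the shape
$$\sigma(p_i):=(\phi\wedge p_i)\vee(\neg\phi\wedge\theta_i),$$
for which condition (\ref{projective_condition}) is automatic: at any world where $\phi$ holds, $\w,x\Vdash\sigma(p_i)\leftrightarrow p_i$ by the very form of $\sigma(p_i)$, and by the finite model property this yields $\phi\vdash_{\j}\sigma(p_i)\leftrightarrow p_i$; moreover $\sigma(\w)$ then agrees with $\w$ at every world satisfying $\phi$. The formulas $\theta_i$ must encode a ``repaired'' valuation at the worlds where $\phi$ fails. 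The idea is to apply the extension property world by world: processing a finite stratified model from its maximal worlds (those without successors) downward, at a world $x$ where $\phi$ fails one looks at the generated submodel, whose strict submodels are, by induction on modal rank, already turned into members of $MOD_S(\phi)$ by $\sigma$; the extension property then supplies a valuation at $x$ making $\phi$ true there. Since this repairing valuation depends only on the finitely many $\sim_n$-types of the already repaired strict successor cone, and these types are defined by the characteristic formulas $X^n$ of Proposition \ref{bissimulation}, it can be captured by taking $\theta_i$ to be a suitable disjunction of $\sigma$-images of such characteristic formulas. One then proves by induction on the rank of $x$ that every world of $\sigma(\w)$ satisfies $\phi$, whence $\sigma(\w)\models\phi$ for all $\w$ and $\sigma$ unifies $\phi$.

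The main obstacle is exactly this construction of $\theta_i$: the repair at $x$ must refer to the successors as seen \emph{after} the substitution has been applied, which introduces an apparent circularity between $\sigma$ and the $\theta_i$. I expect to resolve it using that stratified $\j$-models are finite and converse well-founded, so that the recursion is driven by strictly decreasing modal rank and bottoms out at the maximal worlds, where the extension property applies with a vacuous hypothesis; together with the finiteness of the set of $\sim_n$-classes this unfolds into a genuine finite formula over $\vv{p}$. Verifying that the resulting $\sigma$ reproduces, at each world, precisely the valuation handed back by the extension property — and thereby makes $\phi$ globally true in $\sigma(\w)$ — is the technical heart of the argument; the stratified structure, namely the decomposition into $1$-leaves together with the $R_0$-ordering recorded by $\approx_1$, is what keeps the bookkeeping of successor types tractable.
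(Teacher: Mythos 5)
Your direction ($\Rightarrow$) coincides with the paper's: $\sigma(\w)$ is the required variant, and the verification is the same. The gap is in ($\Leftarrow$), which is where essentially all of the work lies. You propose a \emph{single} substitution $\sigma(p_i)=(\phi\wedge p_i)\vee(\neg\phi\wedge\theta_i)$ and hope to encode in $\theta_i$ a world-by-world repair that proceeds downward through the model. This cannot work as stated: $\theta_i$ has some fixed modal depth, while the repair must propagate along arbitrarily long $R_0$- and $R_1$-chains of worlds at which $\phi$ fails; the new valuation your procedure assigns at $x$ depends on the \emph{already repaired} cone above $x$, whereas one application of $\sigma$ determines the new valuation at every world from the \emph{unrepaired} model simultaneously. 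Converse well-foundedness guarantees that the model-level recursion terminates, but it does not flatten that recursion into one formula of bounded depth. The paper resolves this by building one substitution $\theta_\w$ per $\sim_{n+1}$-class (Lemma \ref{lemma1}), composing them into $\overline{\theta}$, and then iterating: the unifier is $(\overline{\theta})^N$ with $N$ the number of $\sim_{n+1}$-classes, and the nontrivial content is the rank $rk$, the minimum $\mu(\w)$, and Lemmas \ref{lemma2} and \ref{lemma3}, which show that each pass of $\overline{\theta}$ strictly increases $\mu$, so that $N$ passes suffice uniformly in $\w$. Nothing in your sketch plays the role of this bound.

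Second, you apply the extension property of $MOD_S(\phi)$ ``at a world $x$ where $\phi$ fails''. The extension property only licenses changing the valuation at the single root of a model all of whose \emph{proper} generated submodels already satisfy $\phi$; but in a stratified model the formula may fail simultaneously at many worlds of the root's $1$-leaf, which are mutually $R_1$-related and must all be repaired coherently before any talk of a variant makes sense. Even granting a world-by-world repair at the level of models, turning it into a substitution requires repairing the whole $1$-leaf at once. The paper does this by evaluating the $[0]$-subformulas of $\phi$ on the leaf to obtain a formula $\phi'$ in the $[1]$-fragment, showing that $MOD_{GL}(\phi')$ inherits the extension property from $MOD_S(\phi)$, and invoking Ghilardi's projectivity theorem for $\mathbf{GL}$ to obtain the unifier out of which $\theta_\w$ is built. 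This reduction to $\mathbf{GL}$ is the key structural idea of the whole argument and is absent from your proposal; without it, and without the iteration bound above, the ``technical heart'' you defer to is in fact the entire theorem.
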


	\begin{proof} $(\Rightarrow)$ Рассмотрим произвольную модель $\mathcal{W}$, такую что для любого мира $x$, кроме корня, выполнено $\mathcal{W}_x \in MOD_S(\phi) $. Поскольку $\phi$ проективна, то ввиду утверждения \ref{property_unification}, $\sigma(\w) \in MOD_S(\phi)$, где $\sigma$ --- соответствующий унификатор. Утверждается, что $\sigma(\w)$ --- искомый вариант для $\w$.  Действительно, так как $\w_x \in MOD_S(\phi)$ для любого мира $x \ne r$,  то $\w, x \Vdash \phi$. Тогда имеем: 
		$$\sigma(\w), x \Vdash p \ \overset{def}{\Longleftrightarrow} \ \w, x \Vdash \sigma(p) \ \overset{(\ref{projective_condition})}{\Longleftrightarrow}  \ \w, x \Vdash p,$$
		что и требовалось показать. 
		
	($\Leftarrow$) Обратную импликацию доказать значительно сложнее. Этому будет посвящён весь оставшийся раздел. 
		
	\end{proof}
	
	Итак, пусть нам дано, что $MOD_S(\phi)$ обладает свойством расширения. Нам необходимо построить для формулы $\phi$ её проективный унификатор. 
	
	Положим $n = d(\phi)$. Сделаем  следующие очень важное наблюдение:
	
	\begin{lemma} \label{lemma1}
		Для любой модели $\w$ логики $\mathbf{J}_2$ с корнем $r$, у которой
		$$\forall x \in \w \ (rR_0x \Rightarrow \w, x \Vdash \phi),$$ 
		найдётся  подстановка $\theta_\w$, удовлетворяющая свойству (\ref{projective_condition}) для формулы $\phi$, такая что: 
			\begin{enumerate}[label=\upshape(\roman*) ]
				\item \label{lemma1_1}$\theta_\w(\w) \vDash \phi$;
				\item \label{lemma1_2}для любой другой модели $\w'$ и $x \in \w'$ выполнено: 
				$\w'_x \vDash \phi \Rightarrow \theta_\w(\w'_x)  =\w'_x$;
				\item \label{lemma1_4} если для некоторой модели $\w'$, найдётся модель $\w''$, такая что $\w' \approx_1 \w''$ и $\w\sim_{n+1} \w''$, то также $\theta_\w(\w') \vDash \phi$. 
			\end{enumerate} 
	\end{lemma}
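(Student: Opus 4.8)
Пусть $L$ --- 1-лист корня $r$ модели $\w$, то есть класс эквивалентности $r$ по $E_1$. Все миры листа $L$ имеют одни и те же $R_0$-преемники (по условию шкал Игнатьева $xR_1y\Rightarrow\forall z(xR_0z\Leftrightarrow yR_0z)$), а по предположению леммы во всех этих $R_0$-преемниках формула $\phi$ истинна; более того, ввиду стратифицированности (каждая точка вышележащего 1-листа $R_0$-достижима из $r$) формула $\phi$ глобально истинна в любой подмодели $\w_x$ с $rR_0x$. Таким образом, единственные миры, где $\phi$ может быть ложна, лежат в $L$. Отношение $R_1$ на конечном $L$ обратно фундировано, поэтому можно упорядочить $L=\{w_0=r,w_1,\dots,w_k\}$ так, что $w_iR_1w_j\Rightarrow i<j$, и обрабатывать миры в порядке убывания индекса, то есть сверху вниз.

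Первый шаг --- чисто семантический. Индукцией по этому порядку применю свойство расширения (которым $MOD_S(\phi)$ обладает по предположению) к порождённым подмоделям $\w_{w_i}$: когда я дохожу до $w_i$, все его $R_0$-преемники удовлетворяют $\phi$ глобально по сказанному выше, а все его $R_1$-преемники $w_j$ (с $j>i$) внутри $L$ уже исправлены так, что $\phi$ глобально истинна в их подмоделях; значит посылка свойства расширения для $\w_{w_i}$ выполнена, и найдётся вариант, меняющий оценку лишь в $w_i$ и делающий $\phi$ глобально истинной в $\w_{w_i}$. Существенно, что $w_i$ не $R_0$- и не $R_1$-достижим из уже исправленных $w_j$ (в пределах 1-листа нет $R_0$-сравнимых миров, а $R_1$-достижимость противоречила бы $j>i$), поэтому коррекция $w_i$ не портит сделанное ранее. В итоге получаю оценку $v^*$, совпадающую с $v$ вне $L$, для которой $\w^*:=\langle W,R_0,R_1,r,v^*\rangle\vDash\phi$; именно этот $v^*$ подстановка $\theta_\w$ должна воспроизвести на $\w$.

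Второй шаг --- перевод семантической коррекции в подстановку. Построю $\theta_\w$ определением по случаям, управляемым характеристическими формулами $X^{n+1}$ из предложения \ref{bissimulation}. Идея: в произвольной модели мир должен \emph{сохранять} свою оценку, если $\phi$ глобально истинна в его порождённой подмодели (это даст свойство (\ref{projective_condition}) и пункт \ref{lemma1_2}), и \emph{принимать} вычисленное на первом шаге значение $v^*(w_i)$, если он <<распознаётся>> как соответствующий миру $w_i\in L$. Распознавание $w_i$ задаётся формулой $\Delta_i$, описывающей то, что $w_i$ видит: общую для всего $L$ картину $R_0$-преемников (где $\phi$ истинна) и $R_1$-структуру внутри $L$. Поскольку $R_1$ на $L$ обратно фундировано, формулы $\Delta_i$ и поправки $\delta^{(i)}_p\in\{\top,\bot\}$ определяются рекурсией по $R_1$-высоте $w_i$ --- в точности повторяя порядок применения свойства расширения на первом шаге. Так как миров в $L$ и классов $\sim_{n+1}$-бисимуляции конечное число, для каждой переменной $p$ получается конечная булева комбинация вида $\theta_\w(p)=(p\wedge\neg\Delta)\vee\bigvee_i(\Delta_i\wedge\delta^{(i)}_p)$, где $\Delta=\bigvee_i\Delta_i$ выделяет зону коррекции.

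Останется проверить три свойства. Свойство (\ref{projective_condition}) вместе с пунктом \ref{lemma1_2}: если $\w'_x\vDash\phi$, то во всех мирах $\w'_x$ срабатывает ветвь <<сохранить $p$>>, откуда $\theta_\w(\w'_x)=\w'_x$, и по полноте (теорема \ref{stratification}) $\phi\vdash_{\j}\theta_\w(p)\leftrightarrow p$. Пункт \ref{lemma1_1}: на самой $\w$ ветви коррекции срабатывают ровно на $L$ и воспроизводят $v^*$, так что $\theta_\w(\w)=\w^*\vDash\phi$. Ожидаю, что труднее всего будет пункт \ref{lemma1_4}. Здесь из $\w'\approx_1\w''$ следует, что часть $\w'$ над 1-листом корня совпадает с такой же частью $\w''$, а из $\w\sim_{n+1}\w''$ --- что эта часть $\sim_n$-бисимулярна соответствующей части $\w$. Нужно показать, что при этих условиях управляющие $\theta_\w$ характеристические формулы принимают на $\w'$ те же значения, что и на $\w$, поэтому срабатывают те же поправки и $\phi$ становится глобально истинной. Главное препятствие --- аккуратно согласовать два разных отношения ($\approx_1$ для верхней части и $\sim_{n+1}$ для всей модели) и убедиться, что глубины $n+1$ (на единицу больше $d(\phi)=n$) в точности хватает: коррекция 1-листа корня невидима для формул глубины $n$ на уровень выше, но распознаётся характеристическими формулами глубины $n+1$.
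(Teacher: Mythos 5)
Во втором шаге вашего плана есть принципиальный пробел, и он бьёт именно по пункту (iii). Подстановка $\theta_\w(p)=(p\wedge\neg\Delta)\vee\bigvee_i(\Delta_i\wedge\delta^{(i)}_p)$ исправляет оценку только в тех мирах, которые «распознаются» формулами $\Delta_i$, привязанными к конкретным мирам $w_i$ 1-листа корня модели $\w$. Но в пункте (iii) условия $\w'\approx_1\w''$ и $\w\sim_{n+1}\w''$ ничего не говорят об 1-листе корня модели $\w'$: он может быть произвольной (сколь угодно большой) моделью, содержащей миры, не $(n{+}1)$-бисимулярные ни одному $w_i$ и опровергающие $\phi$. В таких мирах срабатывает ветвь «сохранить $p$», и $\phi$ остаётся ложной, то есть $\theta_\w(\w')\vDash\phi$ не получается. Это не техническая мелочь: именно в такой общности пункт (iii) используется дальше --- в лемме \ref{lemma2} роль $\w'$ играет модель $\theta_2(\w)$, нижний 1-лист которой уже изменён предыдущими подстановками и ни с чем в выбранном представителе не бисимулярен. Попытка расширить набор $\Delta_i$ до всех классов $\sim_{n+1}$-бисимуляции с согласованными между собой поправками по существу означала бы передоказательство теоремы Гилярди для $\GL$, которого в плане нет.

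Ключевая идея статьи, отсутствующая в вашем плане, --- редукция к $\GL$. Поскольку все миры 1-листа корня имеют одни и те же $R_0$-преемники, все максимальные подформулы вида $[0]\psi$ в $\phi$ заменяются их истинностными значениями; получается формула $\phi'$ с единственной модальностью $[1]$. Далее из свойства расширения для $MOD_S(\phi)$ выводится свойство расширения для $MOD_{GL}(\phi')$ (вклейкой произвольной $\GL$-модели на место 1-листа корня), и теорема Гилярди даёт \emph{проективный унификатор} $\sigma$ формулы $\phi'$ в $\GL$, который унифицирует $\phi'$ равномерно на \emph{любой} $\GL$-модели, а не только на листе $\mathcal{A}$. Полагая $\theta_\w(p)=(\phi\wedge p)\vee(\neg\phi\wedge\sigma(p))$, сразу получают свойство (\ref{projective_condition}) и пункт (ii), а пункты (i) и (iii) следуют из того, что на 1-листе корня $\theta_\w$ действует как $\sigma$, а истинность $\phi$ и $\phi'$ там эквивалентна. Ваш первый, чисто семантический шаг (послойное исправление оценки в $L$ сверху вниз по $R_1$) корректен, но он даёт лишь вариант одной конкретной модели $\w$, а не подстановку; вся содержательная трудность леммы состоит в равномерности коррекции, и её ваш план не закрывает.
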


		\begin{proof}
		\ref{lemma1_1}. Пусть нам дана модель $\w$, в которой формула $\phi$ истинна во всех мирах, кроме некоторых из 1-листа корня. Обозначим этот лист $\mathcal{A}$. 
		
		 Заметим, что поскольку для любого мира из $\mathcal{A}$ по отношению $R_0$ достижимы все остальные 1-листы, то оценка формул вида $[0]\psi$ во всех мирах $\mathcal{A}$  одинакова. Тогда заменим все максимальные подформулы вида $[0]\psi$ в  $\phi$ на их оценку ($\top$  или $\bot$) в листе $\mathcal{A}$  и обозначим полученную формулу $\phi'$. 
		 
		 Так как в формуле $\phi'$ осталась только одна модальность $[1]$, то 1-лист $\mathcal{A}$ можно рассматривать как модель логики $\mathbf{GL}$ по отношению $R_1$. Рассмотрим класс моделей Крипке $MOD_{GL}(\phi')$ логики $\mathbf{GL}$,  в которых истинна формула $\phi'$, и покажем, что он обладает свойством расширения.  В самом деле, пусть для некоторой модели $\mathcal{M}$ выполнено $\mathcal{M}_x \in MOD_{GL}(\phi')$  для любого мира $x$,  кроме корня. Тогда рассмотрим модель $\w^\mathcal{M}$, полученную заменой 1-листа $\mathcal{A}$ из $\w$ на модель $\mathcal{M}$, присоединённую ко всем остальным листам по отношению $R_0$. По построению,  $(\w^\mathcal{M})_x \in MOD_S(\phi)$ для каждого мира $x\ne r$ (в верхних листах построенной модели истинность $\phi$ не изменилась, а в $\mathcal{A}$ истинность $\phi$ и $\phi'$ эквивалентна), значит для модели $\w^\mathcal{M}$ есть вариант $(\w^\mathcal{M} )'\in MOD_S(\phi)$ (по предположению, $MOD_S(\phi)$ обладает свойством расширения), значит 1-лист корня $(\w^\mathcal{M})'$ является вариантом для $\mathcal{M}$. 
		 
		 Следовательно, по теореме Гилярди для логики $\mathbf{GL}$\footnote{\cite{ghilardi2000best}, теорема 2.2.}, у формулы $\phi'$ существует проективный унификатор $\sigma$ (напомним, что в формуле $\phi'$, кроме модальности $[1]$, других нет). Тогда, по  утверждению \ref{property_unification}, $\forall x \in \mathcal{A} \ (\sigma(\mathcal{A}), x \Vdash \phi')$. Подправим теперь $\sigma$  так, чтобы в тех мирах, где была истинна формула $\phi$, истинность не менялась. 
		 
		 Рассмотрим подстановку $\theta_\w$, определяемую как:
		 $$\theta_\w(p_i)=(\phi\land p_i)\vee (\neg \phi \land \sigma(p_i)).$$
		 Свойство (\ref{projective_condition}) для $\theta_\w$ получаем немедленно по построению.
		 
		Для произвольной модели $\w'$, по определению, 
		 $\theta_{\w}(\w'), x \Vdash p_i$ эквивалентно $\w', x \Vdash \theta_\w(p_i)$, поэтому если $\w', x \Vdash \phi$, то  $$\w', x \Vdash \theta_\w(p_i) \Longleftrightarrow \w', x \Vdash p_i.$$ Таким образом,  $\theta_\w(\w')=\w'$, и  утверждение \ref{lemma1_2} леммы доказано. 
		 
		 По предыдущему наблюдению, для любого мира $x\in \w$ не из 1-листа корня, имеем $\theta_\w(\w), x \Vdash \phi$ и $\w \approx_1\theta_\w(\w)$. Теперь пусть $x\in \mathcal{A}$. Если $\w, x \Vdash \phi$, то $\theta_\w$ оставила оценку переменных в мире $x$ прежней, ровно как и $\sigma$ на $\mathcal{A}$ (поскольку $\sigma$ --- проективная: $\phi' \vdash \sigma(p_i)\leftrightarrow p_i$ и $\w, x \Vdash \phi\leftrightarrow \phi'$). Если же $\w, x \not\Vdash \phi$, то $\theta_\w$ действует на мире $x$ как $\sigma$. Иными словами, модель $\sigma(\mathcal{A})$ изоморфна модели $\theta_\w(\mathcal{A})$, которая, в свою очередь, является подмоделью $\theta_\w(\w)$. Тогда:
		 $$ \sigma(\mathcal{A}), x \Vdash \phi'\Longrightarrow \theta_\w(\w), x \Vdash \phi' \Longleftrightarrow \theta_\w(\w), x \Vdash \phi.$$
		 
		 Таким образом, для любого $x \in \w$ верно $\theta_\w(\w), x \Vdash \phi$, и утверждение \ref{lemma1_1} леммы доказано. 
		 
		 \ref{lemma1_4}. Поскольку $\w'' \sim_{n+1} \w$, то, по предложению \ref{bissimulation}, в модели $\w''$ формула $\phi$ истинна во всех мирах, кроме 1-слоя корня. Более того, истинность формул $\phi$ и $\phi'$ в 1-слое корня $\w''$ эквивалентны (поскольку в 1-слоях $\w$ и $\w''$  эквивалентна истинность всех подформул вида $[0]\psi$). Так как $\w'' \approx_1 \w''$, то всё сказанное верно для модели $\w'$. 
		 
		Посмотрим внимательно на 1-слой корня $\mathcal{A}'$ модели $\w'$: поскольку $\sigma$ --- проективный унификатор, то $\sigma(\mathcal{A}') \vDash \phi'$. Но, как было замечено выше, $\sigma(\mathcal{A}') $ и $ \theta_{\w}(\mathcal{A}')$ изоморфны. Отсюда $\theta_{\w}(\mathcal{A}') \vDash \phi'$ и $\theta_{\w}(\w') \vDash \phi$ (истинность $\phi$ и $\phi'$ в $\mathcal{A}'$ совпадает).
		 
	\end{proof}

	Выберем теперь из каждого класса эквивалентности  по отношению $\sim_{n+1}$  представителя $\w$, удовлетворяющему условию леммы \ref{lemma1}, и рассмотрим подстановку $\overline{\theta}$, равную произведению $\theta_{\w}$ из леммы \ref{lemma1} по всем выбранным $\w$. Поскольку классов эквивалентности конечное число, то произведение тоже будет конечным. По предложению \ref{projective_composition}, $\overline{\theta}$ тоже удовлетворяет свойству (\ref{projective_condition}).
	
	\begin{lemma} \label{lemma2}
			Если для модели $\w$ формула $\phi$ истинна всюду, кроме 1-слоя корня, то $\t(\w) \vDash \phi$. 
	\end{lemma}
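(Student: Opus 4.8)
The plan is to apply the factors of $\t$ to $\w$ one at a time and to follow the valuation leaf by leaf, using that almost every factor acts trivially. First I would record a structural fact: if $x$ is any world of $\w$ outside the root's $1$-leaf $\a$, then the generated submodel $\w_x$ is disjoint from $\a$ (reachability only goes upward, and $\a$ is the bottom $1$-leaf, so $w\notin\a$ and $wR_iu$ force $u\notin\a$). Combined with the hypothesis that $\phi$ is true everywhere outside $\a$, this gives $\w_x\vDash\phi$ for every such $x$. The same observation shows $rR_0x\Rightarrow x\notin\a$, so $\w$ itself satisfies the hypothesis of Lemma \ref{lemma1}; hence its $\sim_{n+1}$-class was assigned a representative, say $\w_{j_0}$, and $\w_{j_0}\sim_{n+1}\w$.

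Write $\t=\theta_{\w_1}\cdots\theta_{\w_k}$, so that by Proposition \ref{property_unification}(iii) we have $\t(\w)=\theta_{\w_1}(\cdots(\theta_{\w_k}(\w)))$, i.e. the factors are applied from the inside out. I would prove by induction on the number of factors already applied that every intermediate model $\u$ agrees with $\w$ on the valuation of all worlds outside $\a$. Indeed, substitutions leave the frame fixed and commute with generated submodels, and for $x\notin\a$ the induction hypothesis gives $\u_x=\w_x\vDash\phi$; so property \ref{lemma1_2} of Lemma \ref{lemma1} yields $\theta_{\w_j}(\u_x)=\u_x$, meaning the valuation at $x$ and above is untouched. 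Consequently every intermediate model is $\approx_1$-equivalent to $\w$, and $\phi$ stays true off $\a$ throughout the process.

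The decisive step uses the chosen representative. Let $\w'$ be the intermediate model fed into the factor $\theta_{\w_{j_0}}$. By the invariant $\w'\approx_1\w$, and since $\w_{j_0}\sim_{n+1}\w$, property \ref{lemma1_4} of Lemma \ref{lemma1}, applied with witness $\w''=\w$, gives $\theta_{\w_{j_0}}(\w')\vDash\phi$. From this point on the running model validates $\phi$ globally, so each remaining factor leaves it unchanged by property \ref{lemma1_2} taken at the root (a model $\mathcal{U}$ with $\mathcal{U}\vDash\phi$ satisfies $\theta(\mathcal{U})=\mathcal{U}$). Therefore $\t(\w)=\theta_{\w_1}(\cdots)\vDash\phi$, which is the claim.

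The point needing care is the bookkeeping about the order of composition. I must guarantee that $\theta_{\w_{j_0}}$ receives an input still $\approx_1$-equivalent to $\w$ (so that \ref{lemma1_4} applies with witness $\w$), and that all factors applied afterwards already see a model validating $\phi$ everywhere (so that \ref{lemma1_2} renders them inert). Both facts rest on the single invariant that no factor can alter any leaf other than $\a$, which in turn is exactly the structural observation established at the outset; so establishing that invariant cleanly is the main obstacle.
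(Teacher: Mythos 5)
Your proof is correct and follows the same route as the paper: decompose $\t$ so that the factor $\theta_{\w'}$ belonging to the $\sim_{n+1}$-class of $\w$ is isolated, show the earlier factors act trivially off the root's $1$-leaf (so the intermediate model is still $\approx_1 \w$), apply property (iii) of Lemma \ref{lemma1} with witness $\w''=\w$, and note the remaining factors preserve global truth of $\phi$ by property (ii). Your explicit justification that $\w$'s class actually has a representative in the product, and the factor-by-factor induction establishing the invariant, are just more detailed renderings of the paper's one-line appeals to Lemma \ref{lemma1}(ii).
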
 

	\begin{proof}
		В самом деле, для модели $\w$ найдётся представитель $\w'$, который входит в произведение $\t$. Разложим $\t=\theta_1\theta_{\w'}\theta_2$. Следовательно, по утверждению \ref{lemma1_2} леммы~\ref{lemma1}, $\theta_2(\w)\approx_1 \w$. Тогда, по пункту \ref{lemma1_4} той же леммы, для моделей $\theta_2(\w)$, $\w$ и $\w'$:
		$$\left.\begin{aligned}
							\theta_2(\w) \approx_1 \w\\
							 \w' \sim_{n+1} \w
						\end{aligned}\right\} \Rightarrow \theta_{\w'}(\theta_2(\w)) \vDash \phi.$$
		Сомножитель $\theta_1$ дальше эту истинность сохранит. 
		
	\end{proof}
	Из доказательства выше становится понятно, для чего мы ранее  доказывали пункт \ref{lemma1_4} леммы \ref{lemma1} в такой общности ---  казалось, намного проще было бы взять  $\w''=\w'$ и получить другое интуитивное ясное утверждение: $$\w' \sim_{n+1} \w \Rightarrow \theta_{\w}(\w')\vDash \phi.$$ Однако для унификации модели  подстановкой $\t$ требуются более тонкие соображения,  о которых мы заблаговременно позаботились.
	
	Наша последняя цель --- предъявить подстановку, удовлетворяющую условию (ii) утверждения \ref{property_unification}, и тем самым завершить доказательство теоремы. 
	
	Зафиксируем модель $\w$ и рассмотрим $x\in \w$. Введём необходимые обозначения и определения: 
	\begin{itemize}
		\item $\w[\phi] = \{x\in \w \ | \ \w_x \vDash \phi\}$ --- множество миров модели $\w$, в порождённых подмоделях которых истинна формула $\phi$. 
		\item {\textit{Ранг}}  $rk(x) = \#\{[\w_y]_{n+1} \mid  xR_0y  \with y \! \in \! \w[\phi]\}$ --- мощность множества классов эквивалентности подмоделей  $\w_y$ по всем мирам $y \in \w$, которые $R_0$-достижимы из $x$ и в порождённых подмоделях которых истинна формула~$\phi$.

	\end{itemize}

	Заметим, что, по лемме \ref{lemma1} для моделей $\w$ и $\t(\w)$, $rk(x) \leqslant rk(\t(x))$  для любой мира $x \in \w$. Если в модели $\w$ найдётся мир $x$, такой что $\w, x \not\Vdash \phi$, то найдётся также мир $y$, такой что $\w, y\not\Vdash \phi$ и $\forall z \in \w_y \ (yR_0z \Rightarrow \w_y, z \Vdash \phi$). Тогда, по лемме~\ref{lemma2}, $\t(\w_y)\vDash \phi$, и количество миров $\w$, в которых истинна $\phi$, возросло. Поэтому, применяя  подстановку $\t$ к модели $|\w|$ раз, формула $\phi$ всюду станет истинной.
	 
	  Однако мощность модели не  самый подходящий параметр, поскольку может  неограниченно возрастать. Мы  покажем, что для унификации любой модели достаточно применить подстановку $\t$ всего $N$ раз, где $N$ --- количество классов эквивалентности по отношению~$\sim_{n+1}$. 
	
	\textit{Минимальным рангом} модели назовём число
	$$\mu(\w)=\min\limits_{x\notin \w[\phi]}rk(x).$$
	Мы хотим показать, что если $\w\not\vDash\phi$, то $\mu(\w)<\mu(\t(\w))$. Поскольку минимальный ранг ограничен сверху числом $N$ и дискретно возрастает, то теорема, наконец, будет доказана ($(\t)^N(\w)\vDash \phi$ для любой модели $\w$). 
	
	Итак, предположим, что  $\w\not\vDash \phi$ и $\mu(\w){=}\mu(\t(\w))$. В частности, для таких миров $x\in\w$,  на которых достигался минимум $\mu(\w)$, верно $rk(x)=rk(\t(x))$. Покажем, что в этом случае $\t(\w_x) \vDash \phi$ (какой бы ни был минимальный мир), откуда получим немедленное противоречие с неизменностью минимального ранга. 
	
	Поскольку $\w_x \not\vDash \phi$, то найдётся мир $y\in \w_x$,  такой что по-прежнему $\w_y \not\vDash \phi$ и $(yR_0z \Rightarrow \w_y, z \Vdash \phi)$. По лемме \ref{lemma2}, $\overline{\theta}(\w_y) \vDash \phi$, поэтому разложим $\t=\theta_1\theta_{\w'}\theta_2$, где $\w' \sim_{n+1} \w_y$. Тогда  $\theta_{\w'}(\theta_2(\w_y)) \vDash \phi$. Для краткости положим $\tilde{\w}{=}\theta_2(\w_x)$ и $\theta'=\theta_{\w'}$. По лемме \ref{lemma1} \ref{lemma1_2}, по-прежнему в любом мире $\tilde{\w}_y$ (напомним, что ограничения моделей на миры коммутируют с подстановками), $R_0$-достижимом из $y$, истинна формула $\phi$   и $rk(x)=rk(\theta'(x))$, где мир $x$ теперь берётся из $\tilde{\w}$.

	\begin{lemma}\label{lemma3}
		$\theta'(\tilde{\w}_z)$ является моделью  $\phi$ для любого мира $z\in \tilde{\w}$.
	\end{lemma}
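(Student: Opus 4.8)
The plan is to argue by converse–well-founded induction on $R_0$ (these frames are inversely well-founded), proving $\theta'(\tilde{\w}_z)\vDash\phi$ for every $z\in\tilde{\w}$ at once. Since restriction to generated submodels commutes with substitution, $\theta'(\tilde{\w}_z)=\theta'(\tilde{\w})_z$, so for a fixed $z$ the induction hypothesis already gives $\theta'(\tilde{\w}_u)\vDash\phi$ for every $u$ with $zR_0u$; that is, after $\theta'$ the formula $\phi$ holds throughout everything lying strictly $R_0$-above the $1$-leaf of $z$. It then remains only to force $\phi$ at the worlds of that $1$-leaf itself, which in a stratified model all share the same $R_0$-successors and form a single $\GL$-layer. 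If $\tilde{\w}_z\vDash\phi$ there is nothing to do: by Lemma \ref{lemma1}\ref{lemma1_2} the substitution $\theta'$ fixes this submodel, so $\theta'(\tilde{\w}_z)=\tilde{\w}_z\vDash\phi$.

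The real case is $\tilde{\w}_z\not\vDash\phi$. Here I would apply Lemma \ref{lemma1}\ref{lemma1_4} to $\theta'=\theta_{\w'}$: it suffices to exhibit a model $\cV$ with $\tilde{\w}_z\approx_1\cV$ and $\w'\sim_{n+1}\cV$, for then $\theta'(\tilde{\w}_z)\vDash\phi$ immediately. I construct $\cV$ from $\tilde{\w}_z$ by leaving the entire model above the root $1$-leaf untouched and re-valuating only that leaf so that it copies the root $1$-leaf of $\w_y$ (recall $\w'\sim_{n+1}\w_y$). By Definition \ref{1-congruent} this guarantees $\tilde{\w}_z\approx_1\cV$ by construction, and reduces everything to verifying $\cV\sim_{n+1}\w_y$.

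To check $\cV\sim_{n+1}\w_y$ I would unfold the inductive clauses of the $(n+1)$-bisimulation at the two roots. The depth-$0$ clause and the back-and-forth along $R_1$ inside the root leaf hold by construction, since $\cV$'s leaf was defined as a copy of the leaf of $\w_y$; this is precisely the $\GL$-fragment that Gilardi's projective unifier $\sigma$, built into $\theta'$ via $\theta'(p_i)=(\phi\wedge p_i)\vee(\neg\phi\wedge\sigma(p_i))$, was designed to control. All the weight thus falls on the back-and-forth along $R_0$: I must show that the $R_0$-successor cones of $\cV$ — which are exactly those of $\tilde{\w}_z$, left unchanged — realize the same $\sim_n$-classes as the $R_0$-successor cones of $\w_y$.

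This matching is the crux, and it is where the hypothesis $rk(z)=rk(\theta'(z))$ is indispensable. Because $y$ is $R_0$-minimal among the worlds failing $\phi$, every cone $R_0$-above $y$ validates $\phi$, so the $(n+1)$-classes counted by $rk(y)$ are precisely the classes of all $R_0$-successor cones of $\w_y$; minimality of $\mu(\w)$ identifies this finite set of classes with the one realized $R_0$-above $z$. The induction hypothesis makes every cone $R_0$-above $z$ satisfy $\phi$ after $\theta'$, while rank preservation forbids $\theta'$ from introducing a new $(n+1)$-class there; comparing the two equal counts forces the set of $\sim_n$-classes of $R_0$-successors of $\tilde{\w}_z$ to coincide with that of $\w_y$, which feeds the $R_0$ back-and-forth and yields $\cV\sim_{n+1}\w_y\sim_{n+1}\w'$. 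I expect the delicate point to be exactly this propagation of a mere equality of ranks into an exact matching of successor cones along the $R_0$-order, rather than the routine leaf-level verification.
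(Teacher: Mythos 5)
Your reduction of the inductive step to Lemma \ref{lemma1}\ref{lemma1_4} has a genuine gap. That lemma forces the witness $\cV$ to satisfy $\tilde{\w}_z \approx_1 \cV$, i.e.\ $\cV$ must coincide with $\tilde{\w}_z$ \emph{outside} the root $1$-leaf; in particular $\cV$ keeps all the untransformed $R_0$-cones of $\tilde{\w}_z$, including those on which $\phi$ still fails (for a general $z$ there are worlds strictly $R_0$-above $z$ violating $\phi$ --- this is exactly why an induction is needed at all). On the other side, every $R_0$-cone of $\w' \sim_{n+1} \w_y$ above the root satisfies $\phi$, and the forth-condition of $\cV \sim_{n+1} \w'$ would require each raw cone $\tilde{\w}_u$ with $zR_0u$ to be $\sim_n$-matched with one of them. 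The rank equality $rk(y)=rk(z)=rk(x)=rk(\theta'(x))$ controls only the $(n{+}1)$-classes of the cones that \emph{already satisfy} $\phi$ (the rank, by definition, counts only $v\in\w[\phi]$) and of the cones \emph{after} applying $\theta'$; it says nothing about the classes of the $\phi$-violating cones of $\tilde{\w}_z$ before $\theta'$, which are precisely the ones $\cV$ contains. Your $R_0$-induction hypothesis gives $\phi$ above $z$ only \emph{after} $\theta'$, and you cannot feed that into Lemma \ref{lemma1}\ref{lemma1_4} without replacing the upper part of $\cV$ by its image under $\theta'$, which destroys $\tilde{\w}_z\approx_1\cV$ and would at best yield $\theta'(\theta'(\tilde{\w}_z))\vDash\phi$ rather than $\theta'(\tilde{\w}_z)\vDash\phi$.

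The paper avoids Lemma \ref{lemma1}\ref{lemma1_4} in this step for precisely this reason. It inducts on $h_0(z)$, the length of the longest $R_0$-chain of $\phi$-violating worlds starting at $z$, and then argues directly on the root $1$-leaf $\a$ of $\theta'(\tilde{\w}_z)$: the coincidence of the three class-sets (obtained from stratification, Lemma \ref{lemma1}\ref{lemma1_2} and the rank hypothesis) is used only to show that the values of the subformulas $[0]\psi$ on $\a$ agree with their values in $\w'$, so that $\phi$ is equivalent on $\a$ to the $[1]$-only formula $\phi'$; an inner induction on $h_1(a)$ then shows that $\theta'$ acts on $\a$ exactly as Ghilardi's projective unifier $\sigma$ of $\phi'$, and projectivity of $\sigma$ gives $\theta'(\a)=\sigma(\a)\vDash\phi'$, closing the step. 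Your instinct that the delicate point is converting a mere equality of ranks into a matching of successor cones is correct, but as stated that conversion fails for the untransformed $\phi$-violating cones, and the argument has to go through the leaf-level analysis instead.
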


	\begin{proof}
		Если $z\in \tilde{\w}[\phi]$, то утверждение  доказано по лемме \ref{lemma1} \ref{lemma1_2}. Поэтому пусть $z\not\in \tilde{\w}[\phi]$.

			Заметим, что  из свойств стратифицированных моделей, выполнено включение
			$$\{[\tilde{\w}_v]_{n+1} \mid  zR_0v \with v \! \in \! \tilde{\w}_z[\phi]\} \subset \{[\tilde{\w}_v]_{n+1} \mid  xR_0v \with v\! \in \! \tilde{\w}_x[\phi]\}.$$

			В то же время, из леммы \ref{lemma1} \ref{lemma1_2}, верно включение 
			$$\{[\tilde{\w}_v]_{n+1} \mid  xR_0v \with v\! \in \! \tilde{\w}_x[\phi]\} \subset \{[\theta'(\tilde{\w}_v)]_{n+1} \mid  xR_0v \with v\! \in \! \theta'(\tilde{\w}_x)[\phi]\}.$$
			
			 Из минимальности ранга: $rk(y)=rk(z)=rk(x)$ и, по предположению, все эти ранги равны также $rk(\theta'(x))$. Значит, все  три множества выше совпадают, поскольку они конечны и  имеют одинаковое количество элементов. 
			 
			 Докажем утверждение леммы индукцией по $h_0(z)$ при, где  $h_i(z)$ --- длина наибольшей цепи  $x_1R_ix_2R_i \ldots R_ix_m$, где $x_1=z$ и 
			 $x_k\not\in \tilde{\w}[\phi]$ при $k=1, \ldots, m$.

			 \textit{База:} $h_0(z)=0$. Тогда формула $\phi$ истинна во всех мирах  модели $\tilde{\w}_z$, а значит, по лемме \ref{lemma1} \ref{lemma1_2}, подстановка $\theta'$ эту истинность сохранит.
			 
			 \textit{Переход:} по предположению индукции, формула $\phi$ истинна во всех мирах модели $\theta'(\tilde{\w}_z)$, кроме 1-слоя корня (обозначим его  $\mathcal{A}$). Нам осталось показать, что $\phi$ также истинна в любом мире из $\mathcal{A}$.

			 Пусть $\sigma$ --- проективный унификатор формулы $\phi'$, полученной заменой оценок всех подформул $[0]\psi$ на их оценку в 1-слое корня модели $\w'$. 
			 Поскольку $\phi$ истинна вне 1-слоя корня $\theta'(\tilde{\w}_z)$ и множества классов эквивалентностей равны, то для любого $a\in \mathcal{A}$ имеем $\theta'(\tilde{\w}_z), a \Vdash \phi \leftrightarrow \phi'$ (истинность подформул вида $[0]\psi$ в 1-слое  с моделью $\w'$ эквивалентны).

			Для любого мира  $a\in \mathcal{A}$ индукцией по $h_1(a)$ докажем, что $\theta'(\a_a)=\sigma(\a_a)$. Если $h_1(a)=0$, то $\mathcal{A}_a$ является моделью формулы $\phi$, значит, по лемме \ref{lemma1} \ref{lemma1_2},  $\theta'(\a_a) = \a_a$. Тогда   $\theta'(\a_a) \vDash \phi'$ и  $\a_a \vDash \phi'$, поскольку $\theta'(\a_a)\vDash \phi \leftrightarrow \phi'$. Следовательно, модели $\theta'(\tilde{\w}_a)$ и  $\sigma(\tilde{\w}_a)$ совпадают:  $\sigma$ --- проективный унификатор ($\phi' \vdash \sigma(p)\leftrightarrow p$) и обе подстановки  модель не изменили. 

			Пусть для всех миров $t$, таких что $aR_1t$, выполнено $\theta'(\tilde{\w}_t)=\sigma(\tilde{\w}_t)$. Если в мире $a$ истина формула $\phi$, то рассуждения аналогичны базе. Если нет, то:
				\begin{equation*}
				\theta'(\tilde{\w}_a), a \Vdash p \overset{def}{\Longleftrightarrow} \tilde{\w}_a,a \Vdash \theta'(p)\overset{a\not\Vdash\phi}{\Longleftrightarrow}\tilde{\w}_a, a \Vdash \sigma(p), 
			\end{equation*} 
		то есть снова $\theta'(\tilde{\w}_a)=\sigma(\tilde{\w}_a)$.
			
			Итак, для 1-слоя корня  $\mathcal{A}$ выполнено $\theta'(\mathcal{A})=\sigma(\mathcal{A})$. 	Но, по утверждению \ref{property_unification}, $\theta'(\tilde{\w}_z), a \Vdash \phi'$ для всех $a\in \mathcal{A}$,  а значит  $\theta'(\tilde{\w}_z) \vDash \phi$, поскольку истинность $\phi$ и $\phi'$ в 1-слое модели $\theta'(\tilde{\w}_z)$ одинакова.

	\end{proof}

	В только что доказанной лемме в качестве $z$ возьмём мир $x$: $\theta'(\tilde{\w}_x) \vDash \phi$, и сомножитель $\theta_1$ далее эту истинность сохранит. Значит, в модели $\t(\w_x)$ формула $\phi$  является истинной  --- противоречие с выбором $\w_x$!
	
	Таким образом, $\theta =(\t)^N$ --- искомый проективный унификатор для формулы $\phi$, и теорема \ref{extention property}, тем самым, доказана.

	\subsection{Финитный тип унификации $\j$} 
	
	Теперь мы хотим показать, что любая унифицируемая формула в логике $\mathbf{J}_2$, имеет конечный базис унификаторов. 
	
	 Мы докажем, что \textit{для любого унификатора $\sigma$ для $\phi$ найдётся некоторая проективная формула $\psi$ глубины не более $d(\phi)$, для которой $\sigma$ также является унификатором и $\psi \vdash_{\mathbf{J}_2} \phi$}. Тогда $\sigma$ будет менее общий, как подстановка,  самого общего унификатора $\psi$, который в свою очередь также является унификатором для $\phi$ по последнему условию. Конечность базиса будет следовать из существования  конечного до доказуемой эквивалентности множества формул глубины не более $d(\phi)$. 
	
	\begin{definition}
		Для класса ${K} \subseteq  MOD_S$ стратифицированных моделей  и некоторой модели $\w$ будем писать $\w \leqslant_n \mathcal{K}$, если для всех $w \in \w$ найдётся модель $\mathcal{U} \in  K$ и мир $u \in \mathcal{U}$, такие что $\w_w \sim_{n} \mathcal{U}_u$.
	\end{definition}
	
	\begin{proposition}
		Класс $K \subseteq MOD_S$ стратифицированных моделей имеет вид $MOD_S(\phi)$ для некоторой формулы $\phi$ глубины не более $n$ в том и только в том случае, если $K$ удовлетворяет следующему условию: 
		$$\w \leqslant_n K \Rightarrow \w \in K \quad \text{для всех  $\w \in MOD_S$.}$$
	\end{proposition}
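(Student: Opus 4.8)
The plan is to prove both implications using the characteristic formulas $X^n_{\w}$ constructed inside the proof of Proposition~\ref{bissimulation}, which for a rooted model $\w$ satisfy $\w', x' \Vdash X^n_{\w} \Leftrightarrow \w \sim_n \w'_{x'}$ and have modal depth at most $n$. Since $\sim_n$ is an equivalence relation and, as noted in the text, there are only finitely many classes $[\,\cdot\,]_n$, each such formula depends (up to provable equivalence) only on the class $[\w]_n$, and finite disjunctions of them remain formulas of depth at most $n$.

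For the direction $(\Rightarrow)$, suppose $K = MOD_S(\phi)$ with $d(\phi) \leq n$, and assume $\w \leq_n K$. I would fix an arbitrary world $w \in \w$; by definition there are $\mathcal{U} \in K$ and $u \in \mathcal{U}$ with $\w_w \sim_n \mathcal{U}_u$. Since $\mathcal{U} \vDash \phi$, truth is preserved in the generated submodel, so $\mathcal{U}_u, u \Vdash \phi$, and then Proposition~\ref{bissimulation} together with $d(\phi) \leq n$ yields $\w_w, w \Vdash \phi$, i.e. $\w, w \Vdash \phi$. As $w$ was arbitrary we get $\w \vDash \phi$, hence $\w \in K$.

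For the converse $(\Leftarrow)$, the key idea is to collect the relevant bisimulation classes. Let $S_K = \{\,[\mathcal{U}_u]_n \mid \mathcal{U} \in K,\ u \in \mathcal{U}\,\}$ be the set of $n$-bisimulation classes of all generated submodels of members of $K$; this set is finite, and for each $c \in S_K$ I choose a rooted representative and write $X^n_c$ for its characteristic formula. Set
$$\phi = \bigvee_{c \in S_K} X^n_c,$$
which has depth at most $n$. The crucial computation is the translation $\w \vDash \phi \Leftrightarrow \w \leq_n K$: indeed $\w \vDash \phi$ means $\w, w \Vdash \phi$ for every $w \in \w$, which by the defining property of $X^n_c$ is equivalent to $[\w_w]_n \in S_K$ for every $w$, i.e. exactly $\w \leq_n K$. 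Thus $MOD_S(\phi) = \{\w \in MOD_S \mid \w \leq_n K\}$. The inclusion $K \subseteq MOD_S(\phi)$ is then immediate, since every generated submodel of a member of $K$ contributes its class to $S_K$; and the reverse inclusion $MOD_S(\phi) \subseteq K$ is precisely the assumed closure property $\w \leq_n K \Rightarrow \w \in K$. Hence $MOD_S(\phi) = K$.

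The genuine content, and the step I expect to require the most care, is identifying $\phi$ and verifying the translation $\w \vDash \phi \Leftrightarrow \w \leq_n K$: one must be precise that \emph{global} truth of a disjunction of characteristic formulas asserts exactly that \emph{every} generated submodel falls into an allowed bisimulation class. The finiteness of the number of $n$-bisimulation classes is what makes $\phi$ an honest finite formula; the degenerate case $K = \emptyset$ is handled by reading the empty disjunction as $\bot$, for which $MOD_S(\bot) = \emptyset$ and the closure condition holds vacuously.
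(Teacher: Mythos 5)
Your proposal is correct and follows essentially the same route as the paper: the forward direction is a direct application of Proposition~\ref{bissimulation}, and the converse builds $\phi$ as the (finite up to equivalence) disjunction of the characteristic formulas $X^n_{\mathcal{U}_u}$ over generated submodels of members of $K$, exactly as in the paper's construction $\bigvee_{\w \in K}\bigvee_{w\in\w} X^n_{\w_w}$. Your write-up is somewhat more explicit about the equivalence $\w \vDash \phi \Leftrightarrow \w \leqslant_n K$ and about the degenerate case $K=\emptyset$, but the substance is identical.
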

	
	\begin{proof}
		 В одну сторону утверждение следует непосредственно из предложения \ref{bissimulation} про $n$-бисимуляцию. В обратную --- напомним, что, из доказательства предложения \ref{bissimulation}, для каждой модели $\w$ существует формула $X^n_\w$, такая что 
		 		\begin{flalign*}\tag{$*$} \label{proposition_closure}
		 	\mathcal{W}', x' \Vdash X^n_{\mathcal{W}} \ \Longleftrightarrow \ \mathcal{W} \sim_n \mathcal{W}' \quad \text{для всех моделей $\w'$}. 
		 \end{flalign*}
Тогда в качестве $\phi$ возьмём формулу $\bigvee_{\w \in K} \bigvee_{w \in \w} X^n_{\w_w}$ (дизъюнкция, на самом деле, будет конечной, поскольку существует только конечное до эквивалентности число формул вида $X^n_\w$). По построению, $K \subseteq MOD_S(\phi)$. Обратно, если в некоторой модели $\w$ истинна формула $\phi$, то в ней истинен один из дизъюнктов, а значит, по условию (\ref{proposition_closure}) и замкнутости $K$ относительно $\leqslant_n$, $MOD_S(\phi) \subseteq K$. 
	 
	\end{proof}

	\begin{definition}
			Класс $K$ стратифицированных моделей называется \textit{стабильным}, если из $\w \in K$ влечёт $\w_x \in K$ для всех миров $x \in \w$. 		
	\end{definition}

В частности, класс $MOD_S(\phi)$ является стабильным. 

\begin{proposition} \label{proposition_stability}
	Пусть $K$  ---  стабильный класс стратифицированных моделей. Тогда $$K_n =\{\w \in MOD_S \ |  \ \forall w \in \w \  \exists \: \mathcal{U} \in K \ \mathcal{U} \sim_{n} \w_x\}$$
	является наименьшим классом, расширяющим $K$, вида $MOD_S(\phi)$ для некоторой формулы $\phi$ глубины не более $n$. 
\end{proposition}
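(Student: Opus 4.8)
The plan is to reduce the statement to the characterization proved in the preceding proposition --- that a class has the form $MOD_S(\phi)$ with $d(\phi)\leq n$ precisely when it is closed under $\leq_n$ --- together with the reflexivity and transitivity of $\sim_n$. The crucial preliminary observation is that, because $K$ is stable, the defining condition of $K_n$ is literally the condition $\w\leq_n K$: given $\w\leq_n K$, for each world $w$ there are $\u\in K$ and $u\in\u$ with $\w_w\sim_n\u_u$, and stability yields $\u_u\in K$, so $\u_u$ itself witnesses $\w\in K_n$; the converse is immediate by taking $u$ to be the root. Hence $K_n=\{\w\in MOD_S : \w\leq_n K\}$, and I shall use this identification throughout.

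First I would check $K\subseteq K_n$: for $\w\in K$ and any $w$, stability gives $\w_w\in K$, and $\w_w\sim_n\w_w$ by reflexivity, so $\w\in K_n$. Next, to see that $K_n$ has the form $MOD_S(\phi)$ for some $\phi$ of depth at most $n$, it suffices by the preceding proposition to verify that $K_n$ is closed under $\leq_n$. Assuming $\w\leq_n K_n$, for each $w$ there are $\u\in K_n$ and $u$ with $\w_w\sim_n\u_u$; unfolding the membership $\u\in K_n$ at the world $u$ produces some $\mathcal{V}\in K$ with $\u_u\sim_n\mathcal{V}$, and transitivity of $\sim_n$ gives $\w_w\sim_n\mathcal{V}$. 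As $w$ was arbitrary, $\w\in K_n$, which is exactly the required closure.

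It remains to prove minimality. Suppose $K\subseteq L$ where $L=MOD_S(\psi)$ for some $\psi$ with $d(\psi)\leq n$; I must show $K_n\subseteq L$. Take $\w\in K_n$ and an arbitrary world $w$. By definition there is $\u\in K\subseteq L$ with $\u\sim_n\w_w$; since $\u\models\psi$, in particular its root forces $\psi$, so by Proposition \ref{bissimulation} together with $d(\psi)\leq n$ we obtain $\w_w,w\Vdash\psi$, whence $\w,w\Vdash\psi$ because the truth value of any formula at $w$ depends only on the generated submodel $\w_w$. As $w$ was arbitrary, $\w\models\psi$, i.e. $\w\in L$.

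The only genuinely delicate point is the closure step: stability is precisely what lets me replace the world-pointed model $\u_u$ occurring in the definition of $\leq_n$ by an honest element of $K$, so that the two-layer condition $\w\leq_n K_n$ collapses, via transitivity of $\sim_n$, back to $\w\leq_n K$. Everything else is routine bookkeeping with the already-established bisimulation machinery and the closure characterization of $MOD_S(\phi)$.
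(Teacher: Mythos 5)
Your proof is correct and follows essentially the same route as the paper: identify $K_n$ with $\{\w \in MOD_S : \w \leq_n K\}$ using stability, then verify the closure condition of the preceding proposition via $\w \leq_n K_n \Rightarrow \w \leq_n K \Rightarrow \w \in K_n$. You merely spell out the details (reflexivity for $K \subseteq K_n$, transitivity for the closure step, and an explicit minimality argument) that the paper leaves implicit.
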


\begin{proof}
	Заметим, что $\w \in K_n \Longleftrightarrow \w \leqslant_n K$.
	Проверим, что для $K_n$ выполнено условие предыдущего предложения: $$\w \leqslant_n K_n \Longrightarrow \w \leqslant_n K \Longrightarrow \w \in K_n.$$
\end{proof}
\begin{definition}
	Пусть $\{(\w_i, r_i)\}^n_{i=1}$ --- семейство попарно 1-подобных (см. определение \ref{1-congruent}) моделей c корнем. Тогда  их \textit{$1$-суммой} $\prescript{1}{}{\sum_{i=1}^n} \w_i $ называется  модель $(\w, r)$, такая что:
	\begin{itemize}
		\item $(\w, r) \approx_1 (\w_i, r_i)$ для каждого $i$;
		\item нижний 1-лист корня $\w$ получается из 1-листов $\w_i$ присоединением нового корня $r$ по отношению $R_1$ с пустой оценкой переменных.
	\end{itemize}
\end{definition}
\begin{proposition} \label{n-sum}
	Пусть $K$ --- класс стратифицированных моделей логики $\mathbf{J}_2$, такой что для любых двух моделей $\w$ и $\w'$
	\begin{itemize}
		\item $\w \sim_\infty \w', \w \in K \Rightarrow \w' \in K$;
		\item $\w \in K \Rightarrow \w_x \in K$ для любого $x$ (то есть класс $K$ является стабильным).
	\end{itemize}
	Тогда $K$ обладает свойством расширения тогда и только тогда, когда для любого конечного множества попарно 1-подобных моделей, лежащих в $K$, найдётся вариант их 1-суммы из класса $K$. 
\end{proposition}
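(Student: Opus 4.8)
План состоит в том, чтобы доказать обе импликации по отдельности; прямое направление рутинно, а обратное несёт основную нагрузку.

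Для импликации <<свойство расширения $\Rightarrow$ условие о $1$-сумме>> я бы взял конечное семейство $\{(\w_i,r_i)\}_{i=1}^n$ попарно $1$-подобных моделей из $K$, построил их $1$-сумму $\mathcal S$ с новым корнем $s$ и проверил, что любая собственная порождённая подмодель $\mathcal S$ уже лежит в $K$. Действительно, мир $x\neq s$ лежит либо в общей верхней части, либо в нижнем $1$-листе ровно одной модели $\w_i$; поскольку $1$-сумма склеивает нижние листы непересекающимся образом по $R_1$ и сохраняет общую верхнюю часть, подмодель $\mathcal S_x$ совпадает с $(\w_i)_x$, которая лежит в $K$ по стабильности. Тогда свойство расширения даёт вариант $\mathcal S'\in K$, и это в точности искомый вариант $1$-суммы из $K$.

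Для обратной импликации я бы зафиксировал модель $\w$ с корнем $r$, у которой все собственные подмодели лежат в $K$, и построил вариант $\w'\in K$. Первое наблюдение: поскольку внутри $1$-листа нет рёбер $R_0$, а $r$ порождает $\w$, всякий мир нижнего $1$-листа $\mathcal A$ корня $R_1$-достижим из $r$, так что $r$ --- $R_1$-наименьший элемент $\mathcal A$. Предположим сначала, что у $r$ есть непосредственные $R_1$-наследники $a_1,\dots,a_m$ внутри $\mathcal A$. Тогда модели $\w_{a_1},\dots,\w_{a_m}$ лежат в $K$ (по стабильности) и попарно $1$-подобны, ибо удаление их нижних листов (то есть $R_1$-конусов над $a_j$) оставляет в каждом случае одну и ту же общую верхнюю часть $\w$. По предположению их $1$-сумма $\mathcal S$ имеет вариант $\mathcal S'\in K$, отличающийся лишь оценкой в новом корне $s$; положим $\w'$ равным варианту $\w$, оценка корня которого совпадает с оценкой $s$. Отображение $f\colon\mathcal S'\to\w'$, переводящее $s\mapsto r$, тождественное на общей верхней части и склеивающее каждую непересекающуюся копию конуса с настоящим конусом над $a_j$ в $\w$, является сюръективным $p$-морфизмом (ограниченным морфизмом): условия для $R_1$ сводятся к транзитивности $R_1$ внутри одного листа, условия для $R_0$ --- к тому, что всякий мир нижнего листа видит по $R_0$ ровно общую верхнюю часть, а оценки $f$ сохраняет (в корне --- по построению, в остальных мирах --- поскольку конусы наследуют оценку $\w$). Так как $p$-морфизм сохраняет истинность всех формул, отсюда $\mathcal S'\sim_\infty\w'$, и замкнутость $K$ относительно $\sim_\infty$ даёт $\w'\in K$.

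Основным препятствием я ожидаю вырожденный случай $\mathcal A=\{r\}$, когда у $r$ нет $R_1$-наследников. Ни одна настоящая $1$-сумма не имеет одноточечного нижнего листа в корне --- её корень всегда приобретает $R_1$-наследников, так что формула $\langle1\rangle\top$ глубины $1$ отделяет её от $\w$, --- поэтому $\w$ не $\sim_\infty$-эквивалентна никакой $1$-сумме и не достигается сворачиванием из предыдущего абзаца. По сути, это вопрос об <<$R_0$-расширении>>: зная, что у всех миров над $r$ подмодели лежат в $K$, нужно подобрать оценку в $r$, при которой вся модель попадает в $K$. Здесь я планирую свести дело к невырожденному случаю индукцией по числу $1$-листов, предъявив модель из $K$, у которой некоторый мир имеет одноточечный нижний лист над тем же верхним $\sim_\infty$-типом, что и у $\w$ (такой мир можно извлечь по стабильности как $R_1$-максимальную точку нижнего листа подходящей $1$-суммы, уже лежащей в $K$), и затем перенести принадлежность к $K$ вдоль $\sim_\infty$. Тонкое место --- добиться, чтобы это извлечение давало в точности нужный верхний тип, без порочного круга.
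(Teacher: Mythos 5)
Ваше доказательство прямой импликации и невырожденного случая обратной по существу совпадает с авторским: в статье для модели $(\w,r)$ рассматривается $1$-сумма $\tilde{\w}$ порождённых подмоделей $\w_y$ по всем $y$ с $rR_1y$ (а не только по непосредственным $R_1$-наследникам, что несущественно), берётся её вариант $\tilde{\w}'\in K$, и вариант $\w'$ модели $\w$ с той же оценкой в корне объявляется $\sim_\infty$-эквивалентным $\tilde{\w}'$; ваш сюръективный $p$-морфизм, склеивающий непересекающиеся копии конусов, --- это как раз аккуратное обоснование авторской фразы <<несложно проверить>>.

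Однако выделенный вами вырожденный случай $\mathcal{A}=\{r\}$ --- настоящий пробел, причём не только вашего текста: в статье он молчаливо пропущен (при $[1]\bot$ в корне $1$-сумма берётся по пустому семейству, которое не восстанавливает верхнюю часть $\w$). Более того, намеченная вами редукция в принципе не может быть проведена из одних лишь гипотез предложения. Рассмотрите класс $K$ всех моделей с пустым отношением $R_0$: он замкнут относительно $\sim_\infty$ (глобальная истинность $[0]\bot$ сохраняется бисимуляцией), стабилен и удовлетворяет условию о $1$-суммах (модели из $K$ суть одиночные $1$-листы, их $1$-сумма снова имеет пустое $R_0$ и потому лежит в $K$ вместе с любым своим вариантом), но свойством расширения не обладает: у двухточечной модели $rR_0u$ с $R_1=\varnothing$ единственная собственная порождённая подмодель $\{u\}$ лежит в $K$, а никакой вариант самой модели в $K$ не попадает. Искомой вами модели из $K$ <<с одноточечным нижним листом над нужным верхним типом>> здесь просто не существует. Следовательно, утверждение в заявленной общности неверно: нужна либо дополнительная гипотеза о <<$R_0$-расширении>> (существование варианта для модели, полученной присоединением снизу по $R_0$ нового корня без $R_1$-наследников), либо отдельный разбор этого случая там, где предложение применяется (для класса $K_n$ в теореме о конечном базисе). Ваша диагностика трудности верна, но доказательство она не завершает.
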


\begin{proof}
	Если $K$ обладает свойством расширения, то для любого конечно множества попарно 1-подобных моделей их 1-сумма $\prescript{1}{}{\sum_{i=1}^n} \w_i $ обладает условию расширения. Значит, для неё найдётся вариант из $K$. 
	
	Обратно, пусть у некоторой модели $(\w, r)$ для каждого мира, кроме корня, выполнено $\w_x\in K$. Для всех миров  $y$, $R_1$-достижимых из $r$, рассмотрим 1-сумму  $\tilde{\w} := \prescript{1}{}{\sum \w_y}$ попарно 1-подобных моделей. Тогда в $K$ найдётся вариант $\tilde{\w}'$ для $\tilde{\w}$. Несложно проверить, что $\w \sim_\infty \tilde{\w}$. Пусть $\w'$ --- вариант для $\w$ с оценкой переменных в корне, как у модели $\tilde{\w}'$. В этом случае, $\tilde{\w}' \sim_\infty \w'$, а значит $\w' \in K$.
	
\end{proof}

\begin{theorem} \label{theorem_basis1}
	Для любой унифицируемой формулы $\phi$ в логике $\mathbf{J}_2$ существует конечный базис унификаторов.
\end{theorem}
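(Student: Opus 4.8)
The plan is to prove the theorem via a \emph{projective approximation} and then read the finite basis off from it. Concretely, I will show that for every unifier $\sigma$ of $\phi$, writing $n=d(\phi)$, there is a projective formula $\psi$ with $d(\psi)\leq n$ such that $\psi\vdash_{\j}\phi$ and $\sigma$ also unifies $\psi$. Granting this, let $\sigma_\psi$ be the most general (projective) unifier of $\psi$. Since $\sigma$ unifies $\psi$ we obtain $\sigma\leq\sigma_\psi$ as substitutions, exactly as in the remark following (\ref{projective_condition}); and since $\vdash_\j\sigma_\psi(\psi)$ and $\psi\vdash_\j\phi$, applying $\sigma_\psi$ yields $\vdash_\j\sigma_\psi(\phi)$, so $\sigma_\psi$ is itself a unifier of $\phi$. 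Because there are only finitely many formulas of depth $\leq n$ up to provable equivalence, the family $\{\sigma_\psi\}$, with $\psi$ ranging over projective formulas of depth $\leq n$ satisfying $\psi\vdash_\j\phi$, is finite and complete; discarding every element that lies below another one leaves a finite basis, which proves the theorem.

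It remains to construct $\psi$. Put $K_0=\{\sigma(\w):\w\in MOD_S\}$. As substitutions leave the frame unchanged, $K_0\subseteq MOD_S$, and by the commutation $\sigma(\w)_x=\sigma(\w_x)$ the class $K_0$ is stable. Moreover $\sigma$ unifies $\phi$, so by Proposition~\ref{property_unification}(ii) every $\sigma(\w)$ globally satisfies $\phi$, whence $K_0\subseteq MOD_S(\phi)$. Let $K_n$ be the least class of the form $MOD_S(\cdot)$ of depth $\leq n$ containing $K_0$, supplied by Proposition~\ref{proposition_stability}, and choose $\psi$ with $MOD_S(\psi)=K_n$. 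Two of the three requirements are then immediate: since $MOD_S(\phi)$ is of this form and contains $K_0$, minimality gives $K_n\subseteq MOD_S(\phi)$, i.e.\ $\psi\vdash_\j\phi$; and for every $\w$ we have $\sigma(\w)\in K_0\subseteq K_n=MOD_S(\psi)$, so $\sigma(\w)\vDash\psi$ for all $\w$, which by Proposition~\ref{property_unification}(ii) means $\vdash_\j\sigma(\psi)$, i.e.\ $\sigma$ unifies $\psi$.

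The decisive point is that $\psi$ is projective. By Theorem~\ref{extention property} this is equivalent to $K_n=MOD_S(\psi)$ having the extension property, and since $K_n$ is stable and closed under $\sim_\infty$ (being of the form $MOD_S(\psi)$), Proposition~\ref{n-sum} reduces the task to this: for any finite family of pairwise $1$-similar (Definition~\ref{1-congruent}) models of $K_n$, some variant of their $1$-sum lies in $K_n$. Let $\tilde{\w}$ be such a $1$-sum with new root $r$. Every generated submodel $\tilde{\w}_x$ with $x\neq r$ coincides with a generated submodel of one of the summands and hence already lies in $K_n$ by stability, so it suffices to choose the valuation at $r$ making $\tilde{\w}\leq_n K_0$, i.e.\ making $\tilde{\w}\sim_n\sigma(\mathcal{V})$ for a single $\mathcal{V}\in MOD_S$. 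I would build $\mathcal{V}$ by selecting, for each summand, a $\sigma$-preimage witnessing its membership in $K_n$, gluing these preimages under a fresh root into one stratified model (a $1$-sum carried out on the source side), and then defining the valuation of $\tilde{\w}$ at $r$ to be the $\sigma$-image valuation at that fresh root; the commutation $\sigma(\mathcal{V})_x=\sigma(\mathcal{V}_x)$ together with the back-and-forth bookkeeping of Proposition~\ref{bissimulation} should then give $\sigma(\mathcal{V})\sim_n\tilde{\w}$.

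The main obstacle is precisely this gluing: the $\sigma$-preimages of the summands need not be pairwise $1$-similar, so care is required to align them into a legitimate $1$-sum on the source side, and one must track the drop from $\sim_n$ to $\sim_{n-1}$ incurred when passing through the newly added root. Everything else — stability of $K_0$, the inclusion $K_0\subseteq MOD_S(\phi)$, and the verification that $K_n$ satisfies the hypotheses of Proposition~\ref{n-sum} — is routine and follows directly from the results already established.
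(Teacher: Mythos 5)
Your overall route is the paper's: form the class of $\sigma$-images of stratified models, close it under $\sim_n$ to a class of the form $MOD_S(\psi)$ with $d(\psi)\leq n$ via Proposition~\ref{proposition_stability}, check that $\psi\vdash_{\j}\phi$ and that $\sigma$ unifies $\psi$, and establish projectivity of $\psi$ through Theorem~\ref{extention property} by verifying the 1-sum criterion of Proposition~\ref{n-sum}. The surrounding bookkeeping --- stability of $K_0$, the inclusion $K_0\subseteq MOD_S(\phi)$, the two easy properties of $\psi$, and the extraction of a finite basis from the finitely many most general unifiers $\sigma_\psi$ --- is correct and matches the paper (which works with the $\sim_\infty$-closure of your $K_0$, yielding the same $K_n$).

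However, the one step where the theorem requires real work is exactly the step you flag and leave open: producing a single preimage $\mathcal{V}$ with $\sigma(\mathcal{V})\sim_n\tilde{\w}$ when the $\sigma$-preimages $\u_i$ of the summands $\w_i$ need not be pairwise 1-similar, so that the ``1-sum on the source side'' is not even defined. This is a genuine gap, not routine care. The paper closes it with an explicit alignment device: fix $\u_1$ and, for each $i>1$, replace $\u_i$ by the model $\tilde{\u}_i$ whose bottom 1-leaf is that of $\u_i$ but whose remaining 1-leaves are copied from $\u_1$, so that $\tilde{\u}_i\approx_1\u_1$ by construction. The claim $\w_i\sim_n\sigma(\tilde{\u}_i)$ survives this surgery because, in a stratified model, every world of the bottom 1-leaf sees via $R_0$ exactly the common upper part; since the $\w_i$ are pairwise 1-similar, that upper part is the one already matched (up to the relevant bisimulation depth) against the upper part of $\sigma(\u_1)$, while the $R_1$-component of the game is played entirely inside the untouched bottom 1-leaf of $\u_i$. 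With the $\tilde{\u}_i$ now pairwise 1-similar, their 1-sum $\tilde{\u}$ exists, and copying the root valuation of $\sigma(\tilde{\u})$ into the root of the 1-sum of the $\w_i$ gives the required variant. By contrast, your second worry --- a ``drop from $\sim_n$ to $\sim_{n-1}$'' at the newly added root --- is harmless: the clause for $n$-bisimilarity of two roots only demands $(n-1)$-bisimilarity of their successors, and the summands supply full $\sim_n$ there.
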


\begin{proof}
	Пусть $\sigma$ -- какой-нибудь унификатор для $\phi$, и положим $n=d(\phi)$. Мы хотим показать, что существует такая проективная формула $\psi$, что $\psi \vdash_{\mathbf{J}_2} \phi,  d(\psi)\leqslant n$ и $\sigma$ является унификатором для $\psi$. Тогда $\sigma$ будет менее общий самого общего унификатора для $\psi$, который, в свою очередь, будет являться унификатором для формулы $\phi$. 
	
	Рассмотрим класс $$K = \{\w \in MOD_S \ | \ \exists  \u\in MOD_S \ \w \sim_\infty \sigma(\u)\}$$
	и его расширение $K_n$ (см. предложение  \ref{proposition_stability}). Непосредственно из определения, $K_n$ удовлетворяет условиям предложения \ref{n-sum} (стабильность очевидна; из $\sim_\infty$ безусловно следует $\sim_{n}$). Более того, мы утверждаем, что класс $K_n$ обладает свойством расширения. 
	
  Пусть дано семейство попарно 1-подобных моделей $\{(\w_i, r_i)\}^k_{i=1}$  из $K_n$. Это значит, что для каждой модели $\w_i$ найдётся модель $\u_i$, что $\w_i \sim_n \sigma(\u_i)$. Поскольку $\u_i$ не обязаны быть попарно 1-подобными, то с помощью модели $\u_1$ мы переделаем остальные $\u_i$: для каждого $i>1$ рассмотрим новую модель $\tilde{\u}_i$, такую что нижний 1-лист $\tilde{\u}_i$ совпадает с 1-листом корня $\u_i$ и $\tilde{\u}_i \approx_1\u_1$ (по определению, положим $\tilde{\u}_1:=\u_1$). Теперь $\tilde{\u}_i$ попарно 1-подобны и, так как $\w_i$  тоже попарно 1-подобны, то $\w_i \sim_n\sigma(\tilde{\u}_i)$ для каждого $i$. 
  
  Рассмотрим 1-сумму моделей $\tilde{\u}:= \prescript{1}{}{\sum_{i=1}^n} \tilde{\u}_i$ и возьмём в качестве  варианта для $\w :=\prescript{1}{}{\sum_{i=1}^n} \w_i$ модель $\w'$, оценка  переменных в корне которой совпадает с оценкой переменных в корне $\sigma(\tilde{\u})$.  Несложно проверить, что $\w' \sim_n \sigma(\tilde{\u})$. Тогда, $\w'\in K_n$, и значит, по предложению \ref{n-sum}, класс $K_n$ обладает свойством расширения. 
  
  Теперь, применяя предложение \ref{proposition_stability} к классу $K_n$, имеем $K_n=MOD_S(\psi)$ для некоторой формулы $\psi$, глубины не более $n$. Как мы показали выше, $MOD_S(\psi)$ обладает свойством расширения, поэтому по теореме \ref{extention property}, формула $\psi$ является  проективной. Более того, $\sigma$ является унификаторм для $\psi$, поскольку для любой модели $\w$,  $\sigma(\w)\in K \subseteq MOD_S(\psi)$ (см. утверждение \ref{property_unification}). Осталось только убедиться, что $\psi \vdash_{\mathbf{J}_2} \phi$.

	Поскольку $\sigma$ --- унификатор для $\phi$, то $\sigma(\w) \vDash \phi$ для любой модели $\w \in MOD_S$. 	Значит, $K \subseteq MOD_S(\phi)$, поскольку класс $MOD_S(\phi)$ замкнут относительно $\sim_\infty$. Так как $d(\phi)\leqslant n$, то $K_n \subset MOD_S(\phi)$, по предложению \ref{bissimulation}. Таким образом, 
	$$K_n = MOD_S(\psi)\subseteq MOD_S(\phi),$$ 
	следовательно, $\psi \vdash_{\mathbf{J}_2}\phi$ ввиду полноты логики $\mathbf{J}_2$.
	
\end{proof}

	Теперь мы можем переформулировать доказательство теоремы \ref{theorem_basis1}, используя концепт \textit{проективной аппроксимации}  формулы $\phi$. Обозначим через $L$ логику $\mathbf{J}_2$ или логику \textbf{GLB}.
	
	 Пусть $S(\phi)$ --- множество проективных формул $\psi$, таких что $d(\psi) \leqslant d(\phi)$ и  $\psi \vdash_L \phi$. 
	 
	 \begin{definition}
	 	\textit{Проективной аппроксимацией} $\Pi(\phi)$ формулы $\phi$ называется минимальное подмножество $S(\phi)$, такое что для любой формулы $\psi \in S(\phi)$ найдётся формула $\gamma \in \Pi(\phi)$, такая что $\psi \vdash_L  \gamma$. 
	 \end{definition}
	
	Иными словами, проективная аппроксимация $\phi$ получается из $S(\phi)$ оставлением одной формулы из $\vdash_L$-максимального класса. Теорема \ref{theorem_basis1} утверждает, что каждый унификатор для $\phi$ является унификатором для некоторой формулы из $S(\phi)$, а значит и для некоторой формулы из $\Pi(\phi)$. Следовательно, любой унифиактор для формулы $\phi$ является менее общим самого общего унификатора для некоторой формулы из $\Pi(\phi)$. Более того, верно
	
	\begin{proposition}
		Самые общие унификаторы для формул из $\Pi(\phi)$ образуют базис унификаторов для формулы $\phi$ в логике $L=\j$. 
	\end{proposition}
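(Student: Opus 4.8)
The plan is to verify that the family $\{\sigma_\gamma \mid \gamma \in \Pi(\phi)\}$, where $\sigma_\gamma$ denotes a projective (hence most general) unifier of the projective formula $\gamma$, is simultaneously a \emph{complete} set of unifiers for $\phi$ and an antichain with respect to $\leq$; together these are exactly the two clauses in the definition of a basis.

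First I would record completeness, most of which is already in hand. Each $\sigma_\gamma$ really is a unifier of $\phi$: from $\gamma \in S(\phi)$ we have $\gamma \vdash_L \phi$, so $\sigma_\gamma(\gamma) \vdash_L \sigma_\gamma(\phi)$, and since $\vdash_L \sigma_\gamma(\gamma)$ we obtain $\vdash_L \sigma_\gamma(\phi)$. Completeness itself follows from Theorem \ref{theorem_basis1}: an arbitrary unifier $\sigma$ of $\phi$ unifies some $\psi \in S(\phi)$; choosing $\gamma \in \Pi(\phi)$ with $\psi \vdash_L \gamma$ and applying $\sigma$ shows $\vdash_L \sigma(\gamma)$, so $\sigma$ unifies $\gamma$ and therefore $\sigma \leq \sigma_\gamma$ because $\sigma_\gamma$ is most general.

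The heart of the argument is an order-preserving dictionary between projective formulas and their most general unifiers, which I would isolate as a lemma: for projective $\gamma_1, \gamma_2$ with most general unifiers $\sigma_1, \sigma_2$,
\[ \sigma_1 \leq \sigma_2 \iff \gamma_1 \vdash_L \gamma_2. \]
For $(\Leftarrow)$ I take $\tau = \sigma_1$: projectivity of $\gamma_2$ gives $\gamma_2 \vdash_L \sigma_2(p) \leftrightarrow p$, hence $\sigma_1(\gamma_2) \vdash_L \sigma_1(\sigma_2(p)) \leftrightarrow \sigma_1(p)$; and $\vdash_L \sigma_1(\gamma_2)$ (apply $\sigma_1$ to $\gamma_1 \vdash_L \gamma_2$ and use $\vdash_L \sigma_1(\gamma_1)$), so $\vdash_L \sigma_1(\sigma_2(p)) \leftrightarrow \sigma_1(p)$, which is precisely $\sigma_1 \leq \sigma_2$. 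For $(\Rightarrow)$, suppose $\vdash_L \tau(\sigma_2(p)) \leftrightarrow \sigma_1(p)$ for all $p$; since $\gamma_1$ is projective, $\gamma_1 \vdash_L \sigma_1(p) \leftrightarrow p$, whence $\gamma_1 \vdash_L \tau(\sigma_2(p)) \leftrightarrow p$. By the substitution-theorem form of $(\ref{projective_condition})$ this upgrades to $\gamma_1 \vdash_L \tau(\sigma_2(\gamma_2)) \leftrightarrow \gamma_2$; but $\vdash_L \sigma_2(\gamma_2)$ gives $\vdash_L \tau(\sigma_2(\gamma_2))$, and therefore $\gamma_1 \vdash_L \gamma_2$.

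Finally I would close the argument by exploiting the structure of $\Pi(\phi)$: its elements are representatives of pairwise distinct $\vdash_L$-maximal classes of $S(\phi)$, so for distinct $\gamma_1, \gamma_2 \in \Pi(\phi)$ neither $\gamma_1 \vdash_L \gamma_2$ nor $\gamma_2 \vdash_L \gamma_1$ can hold (maximality of $\gamma_1$ would otherwise force $\gamma_1$ and $\gamma_2$ into the same class). By the lemma this transfers to $\sigma_1 \not\leq \sigma_2$ and $\sigma_2 \not\leq \sigma_1$, so the $\sigma_\gamma$ form an antichain, and combined with completeness this yields the required basis. I expect the main obstacle to be the implication $\sigma_1 \leq \sigma_2 \Rightarrow \gamma_1 \vdash_L \gamma_2$: a direct semantic attempt only controls the image $\tau(\sigma_2(\w))$ and loses global truth of $\gamma_2$ under $\tau$, so the decisive move is the syntactic one of pushing $\gamma_2$ itself through the projectivity of $\gamma_1$ via the substitution theorem.
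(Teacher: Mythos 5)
Your proof is correct and follows essentially the same route as the paper: completeness via Theorem \ref{theorem_basis1}, the key equivalence $\sigma_1\leq\sigma_2\Leftrightarrow\gamma_1\vdash_L\gamma_2$ for projective formulas, and the antichain structure of $\Pi(\phi)$. The only (immaterial) difference is in the direction $\sigma_1\leq\sigma_2\Rightarrow\gamma_1\vdash_L\gamma_2$, where you apply the substitution-theorem form of (\ref{projective_condition}) to the composite $\tau\sigma_2$, whereas the paper first derives $\vdash_L\sigma_1(\gamma_2)$ and then invokes $\gamma_1\vdash_L\gamma_2\leftrightarrow\sigma_1(\gamma_2)$.
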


	\begin{proof}
		Ввиду сказанного выше, осталось проверить, что самые общие унификаторы для формул из $\Pi(\phi)$ попарно не сравнимы относительно $\leqslant$. 
		
		Пусть $\psi_1, \psi_2 \in \Pi(\phi)$ --- две проективные формулы и $\sigma_1, \sigma_2$ их самые общие унификаторы соотвественно. Мы утверждаем, что 
		$\psi_1 \vdash_{\j} \psi_2 \Longleftrightarrow \sigma_1 \leqslant \sigma_2.$
		
		В одну сторону, если $\psi_1 \vdash_{\j} \psi_2$, то из $\psi_2 \vdash_{\j} p \leftrightarrow\sigma_2(p)$, имеем $$\sigma_1(\psi_2)\vdash_{\j} \sigma_1(p) \leftrightarrow \sigma_1(\sigma_2(p)),$$ а значит $ \sigma_1 \leqslant \sigma_2$, поскольку $\sigma(\psi_2)$ является теоремой ${\j}$. 
		
		В другую --- если  $\sigma_1 \leqslant \sigma_2$, то найдётся такая подстановка $\tau$, что  $\sigma_1$ эквивалентна композиции $\tau\sigma_2$. В частности, для этой эквивалентности имеем 
		$$\vdash_{\j}  \sigma_1(\psi_2) \leftrightarrow \tau(\sigma_2(\psi_2)).$$
		Поскольку $\vdash_{\j}  \sigma_2(\psi_2)$, то $\tau(\sigma_2(\psi_2))$ также является теоремой ${\j}$, а значит $\vdash_{\j}\sigma_1(\psi_2)$. Но в то же время, $\psi_1\vdash_{\j}\psi_2 \leftrightarrow \sigma_1(\psi_2)$, так что $\psi_1 \vdash_{\j} \psi_2$.
		
	\end{proof}

	Описанный выше алгоритм показывает разрешимость построения конечной проективной аппроксимации для заданной формулы логики $\j$.
		
		\subsection{Описание допустимых правил в $\j$}
		В заключение, мы готовы дать описание допустимых правил  в логике  $\mathbf{J}_2$. Напомним, что правило $\phi_1/\phi_2$ в логике $L$ называется \textit{допустимым}, если для каждой подстановки $\sigma$, такой что $\vdash_L \sigma(\phi_1)$, также $\vdash_L \sigma(\phi_2)$. 
		
		\begin{theorem}
			Правило $\phi_1/\phi_2$  является допустимым в логике $\j$ тогда и только тогда, когда для всех формул $\psi \in \Pi(\phi_1)$ выполнено $\psi\vdash_{\j} \phi_2$. 
		\end{theorem}
	
	\begin{proof}
		Если правило $\phi_1/\phi_2$ является допустимым в ${\j}$ и $\psi \in \Pi(\phi_1)$, то формула $\psi$ является проективной и $\psi \vdash_{\j} \phi_1$. Возьмём произвольный унификатор $\sigma$ для $\psi$: он удовлетворяет свойству $\psi \vdash_{\j} \phi_2 \leftrightarrow\sigma(\phi_2)$ (см. раздел \ref{section_projectivity}). Тогда  $\vdash_{\j}\sigma(\phi_1)$, откуда $\vdash_{\j}\sigma(\phi_2)$. Значит $\psi \vdash_{\j} \phi_2$. 
		
		Наооборот, пусть для всех  $\psi \in \Pi(\phi_1)$ выполнено $\psi\vdash_{\j} \phi_2$ и $\sigma$ --- некоторый унификатор для $\phi_1$. Тогда в $\Pi(\phi)$ должна найтись формула $\psi$, что $\vdash_{\j} \sigma(\psi)$. По предположению, $\psi \vdash_{\j} \phi_2$, следовательно $\vdash_{\j} \sigma(\phi_2)$, то есть правило $\phi_1/\phi_2$  является допустимым.
		
	\end{proof}

	Таким образом, проблема допустимости правил вывода логики $\j$ является разрешимой, поскольку разрешимы построение конечной проективной аппроксимации и проблема выводимости в логике $\j$.

\bibliographystyle{plain}
\bibliography{main.bib}

\end{document}